\newtheorem{exmp}{\bf Example}[section]
\newtheorem{remark}{\bf Remark}[section]
\newtheorem{lem}{\bf Lemma}[section]
\newtheorem{theor}{\bf Theorem}[section]
\newcommand{\bx}{\bm{x}}
\newcommand{\BH}{{\bf H}}
\newcommand{\fl}[2]{\frac{#1}{#2}}
\newcommand{\p}{\partial}
\newcommand{\bea}{\begin{eqnarray}}
\newcommand{\eea}{\end{eqnarray}}
\newcommand{\beas}{\begin{eqnarray*}}
\newcommand{\eeas}{\end{eqnarray*}}
\newcommand{\be}{\begin{equation}}
\newcommand{\ee}{\end{equation}}
\newcommand{\bes}{\begin{equation*}}
\newcommand{\ees}{\end{equation*}}
\title{A normalized gradient flow method for computing ground states of spin-2 Bose-Einstein condensates\thanks{{\bf Funding}: The work was partially supported by
the National Key R\&D Program of  China (No. 2024YFA1012803) (Q. Tang),  the National Natural Science Foundation of China  Grant No. 11971335 (Q. Tang) and 11971007 (Y. Yuan)
and the Ministry of Education of Singapore
 under its Academic Research Fund MOE-T2EP20122-0002 (A-8000962-00-00) (W. Bao).}}
\author{Weizhu Bao\thanks{Department of Mathematics, National
University of Singapore, Singapore 119076 ({\it
matbaowz@nus.edu.sg}, URL: https://blog.nus.edu.sg/matbwz/).}
\and Qinglin Tang\thanks{
School of Mathematics, Sichuan University, Chengdu 610064, P. R. China
({\it qinglin\_tang@scu.edu.cn},
URL: http://www.qinglin-tang-scu.com).}
\and Yongjun Yuan\thanks{LCSM (MOE),
School of Mathematics and Statistics,
Hunan Normal University, Changsha, Hunan 410081, P. R. China
({\it yyj1983@hunnu.edu.cn}).}}
\date{\today}
\begin{document}
\maketitle

\begin{abstract} We propose and analyze an efficient and accurate numerical method for computing ground 
states of spin-2 Bose-Einstein condensates (BECs) by using the normalized gradient flow (NGF). 
In order to successfully extend the NGF to spin-2 BECs which has
five components in the vector wave function but with only two physical constraints on total mass 
conservation and magnetization conservation, 
two important techniques are introduced for designing the proposed numerical method. The first one is 
to systematically investigate the ground state structure and property of spin-2 BECs within  
a spatially uniform system, which can be used on how to 
properly choose initial data in  the NGF for computing ground 
states of spin-2 BECs. The second one is 
to introduce three additional projection conditions based 
on the relations between the chemical potentials,  together with the two existing 
physical constraints, such that the five projection parameters used in 
the projection step of the NGF can be uniquely  determined. 
Then a backward-forward Euler finite difference method is adapted to discretize 
the NGF. We prove rigorously that there exists a unique solution of 
the nonlinear system for determining the five projection parameters in the full discretization
of the NGF under a mild condition on the time step size. 
Extensive numerical results on ground states of spin-2 BECs {\color{black}with different types of phases and under different potentials are reported to show} 
the efficiency and accuracy of the proposed numerical method and to demonstrate 
several interesting physical phenomena on ground states of spin-2 BECs.  
\end{abstract}

\begin{keywords} spin-2 Bose-Einstein condensate, {\color{black}Gross-Pitaevskii energy functional}, 
ground state, normalized gradient flow, backward-forward Euler finite difference method.
\end{keywords}

\begin{AMS} 35Q55, 49J45, 65N06, 65N12, 65Z05,  81-08
\end{AMS}

\pagestyle{myheadings} \markboth{Weizhu Bao, Qinglin Tang and Yongjun Yuan}
{Ground states of spin-2 BECs}

\section {Introduction} 
\setcounter{equation}{0}

Since its experimental realization 
 in 1995 \cite{AEMWC1995, BSTH1995, DMADDKK1995}, 
 the  Bose-Einstein condensate (BEC)
has stimulated great excitement in the physical community and regains vast interests 
in atomic and molecular as well as condense matter physics.  
At early stage, atoms were magnetically trapped in BEC experiments and hence their
spin degrees of freedom were frozen \cite{AEMWC1995, BSTH1995, DMADDKK1995}.  
Nevertheless,  recently developed optical trapping techniques \cite{SA} have enabled  to release 
the spin internal degrees of freedom,  opening up a new research arena of quantum 
many-body systems named spinor BEC \cite{CH,  GG, Pasq2011}.   
Extensive theoretical and experimental studies have been carried out to reveal numerous 
new quantum  phenomena which are generally absent in a spin-frozen condensate 
\cite{BM, CY, IO,  KU2, KU1,  KA, WG,XX,YJ}. Within the mean-field approximation, 
in contrast with a spin-frozen BEC whose order parameter can be well described  by a 
{\color{black} scalar wave function $\phi$, a spin-$F$  $(F\in \mathbb{N})$  BEC is described by a
macroscopic complex-valued vector wave function $\Phi$  consisting of  $2F+1$ components, 
each of which characterizes  one of the $2F+1$ hyperfine states  $(m_F=-F, \cdots, F)$.}
 In this paper, we consider spin-2 BECs, i.e. $F=2$. 
 
{\color{black}  One important problem in the theoretical study of a spin-2 BEC is 
to find its ground state  so as  to initialize its dynamics and to  predict new 
important phases of the ground state  which can be later compared with or confirmed  
by  those physical  experimental observations.  The ground state  of a spin-2 BEC   is 
defined as the minimizer of the following dimensionless Gross-Pitaevskii (GP) energy functional 
\cite{BCY,KU2,W2}:
\begin{equation}
\label{energy}
 \mathcal{E}(\Phi)=\int_{\mathbb{R}^d}\bigg[\ \sum_{\ell=-2}^2\left(\frac{1}{2}|\nabla\phi_\ell|^2+V(\bx)|\phi_\ell|^2\right)
    +\frac{\beta_0}{2}\rho^2+\frac{\beta_1}{2}|\mathbf{F}|^2 +\frac{\beta_2}{2}|A_{00}|^2 \bigg]\;d\bx,
\end{equation} 
satisfying both the {\em mass \eqref{mass}} and {\em magnetization} \eqref{magnet} constraints:
\bea
\label{mass}
 \mathcal{N}(\Phi(\bx))&:=&\sum_{\ell=-2}^2 \int_{\mathbb{R}^d}|\phi_\ell(\bx)|^2d\bx=1,\\
 \label{magnet}
  \mathcal{M}(\Phi(\bx))&:=&\sum_{\ell=-2}^2\int_{\mathbb{R}^d}\ell |\phi_\ell(\bx)|^2d\bx=M.
\eea
Here, $M\in [-2, 2]$ is a given constant, 
$\bx\in \mathbb{R}^d$ ($d=1,2,3$) is the spatial variable, 
$\Phi(\bx)=:\big(\phi_{2}(\bx), \phi_{1}(\bx),\phi_{0}(\bx),\phi_{-1}(\bx),\phi_{-2}(\bx)\big)^T$ is the  wave function,
$\rho:=\sum_{\ell=-2}^2\rho_\ell$ is the total density 
with $\rho_\ell=|\phi_\ell(\bx)|^2$ being the $\ell$-th component density,  $\beta_0$, $\beta_1$, 
$\beta_2$ are real constants characterizing the spin-independent interaction, 
spin-exchange interaction and spin-singlet interaction, respectively. 
In addtion,  $V(\bx)$ is a real-valued function represents the external trapping potential,
$A_{00}(\Phi):=\big(2\phi_{2}\phi_{-2}-2\phi_{1}\phi_{-1}+\phi_0^2\big)/\sqrt{5}$ 
is the amplitude of the spin-singlet pair and
$\mathbf{F}(\Phi):=[F_x(\Phi),F_y(\Phi),F_z(\Phi)]^{\top}$ is the spin vector 
with its components defined as  \cite{BCY}: 
\begin{equation*}
\begin{split}
&F_x={\rm Re} ( F_{+} ), \  F_y={\rm Im} ( F_{+} ),
\  \mbox{with} \  F_+=2\big(\bar{\phi}_{2}\phi_{1}+\bar{\phi}_{-1}\phi_{-2}\big)+\sqrt{6}\big(\bar{\phi}_{1}\phi_{0}+\bar{\phi}_{0}\phi_{-1}\big),\\	 
&F_z=2\big(|\phi_{2}|^2-|\phi_{-2}|^2 \big)+|\phi_{1}|^2-|\phi_{-1}|^2,
\end{split}
\end{equation*}
where $\bar{f}$ denotes the complex conjugate of $f$. Therefore, the ground state  $\Phi_g(\bx)$ is the
solution of the following nonconvex minimization problem:
\be
\label{ground}
\Phi_g :=\arg \min_{\Phi\in \mathbb{S}} \mathcal{E}(\Phi), 
\ee
where the nonconvex set $\mathbb{S}$ is defined as}
\begin{equation}
\label{ground-const}
 \mathbb{S}=\left \{\Phi=(\phi_2,\phi_1,\phi_0,\phi_{-1},\phi_{-2})^T\in\mathbb{C}^5\ | \
 \mathcal{N}(\Phi)=1,\ \mathcal{M}(\Phi)=M,\ \mathcal{E}(\Phi)<\infty \right\}.
\end{equation} 
 
  Existence and uniqueness of the ground state  \eqref{ground} was carried out in \cite{BCY,KU2,KU1}.
  Meanwhile, validity of the so called single mode approximation (SMA) of the ground state 
  (which  simplifies the ground state computation) is
partially investigated.  Different numerical methods have been proposed to compute the ground state of a scalar BEC
 \cite{AD,AD2014,ALT2017,B1,BC,BCL,BD,BT,BW,CC1,CC2,CL,CS,DK2010,WWB} and a spin-1 BEC 
 \cite{BCZ,BL,BW,CL2021,LC2021,YXTW2018,ZY,ZYY}. 
 Among  them, a simple and most popular method is the normalized gradient flow (NGF) (or imaginary time method)
 incorporated with a proper discretization  scheme to evolve the resulted gradient flow under 
 normalization of the wave function \cite{B1,BD,BL,BW}.
However, to extend the NGF from single-component BEC and spin-1 BEC to spin-2 BEC,  
due to the fact that we only have two conservation conditions \eqref{mass} and \eqref{magnet}
 and there are five projection 
constants to be determined in the projection step, it is unclear that
the NGF method could be easily and straightforwardly extended to compute 
ground states of spin-2 BECs.
A projection gradient method \cite{W2} was proposed to compute ground states of spin-2 BECs, 
where a continuous normalized gradient flow (CNGF) was discretized 
by the Crank-Nicolson finite difference method with a proper and  special way
 to deal with the nonlinear terms. This scheme was proved to be energy-diminishing and 
 conserve both the total mass and magnetization in the discrete level.
However,  a fully nonlinear coupled system need to be solved  at each time step,
which introduces much computational cost, especially in three dimensions.  
Recently, numerical methods were presented for computing ground states of spin-2 BEC
based on the NGF with inaccurate projections \cite{CL2021,TCW}.  
 The main  objective of this paper is to present and analyze a numerical method
 for computing ground states of spin-2 BECs via the NGF.
 In order to do so, two main techniques are presented, which are:  
(i)  to carry out a systematic study on the ground state structure and property 
of a spin-2 BEC within a spatially uniform system, which is then used to choose 
simple and proper initial data in the NGF for computing ground states of
spin-2 BECs,   and 
(ii) to introduce three additional projection  conditions based on the relations between 
the chemical potentials of a spin-2 BEC for overcoming the fact that there are five components in the vector wave function but with only two constraints on total
mass conservation and total magnetization conservation. 
In fact, the proposed three additional projection  conditions, together with  
total mass conservation and magnetization conservation, can completely  
determine  the five projection constants used in projection step of the NGF.  
This enables us to extend the  simple and powerful  NGF method
to compute ground states of spin-2 BECs.

The rest of the paper is organized as follows. In section 2, 
 ground state structure and property of spin-2 BEC in a spatially 
 uniform system are investigated systematically.
In section 3, a NGF is constructed by introducing three additional 
 projection conditions and then  a backward-forward Euler finite difference method 
is presented to discretize the NGF. 
 In section 4,  we report extensive numerical results on ground states of 
 spin-2 BECs with {\color{black} different types of phases and under 
different potentials in  one and two dimensions.}
 Finally,  some conclusions are drawn in section 5.

\section{Ground states and their properties}
In this section, we mainly investigate the ground state structure and property of a
spin-2 BEC.

\subsection{Euler-Lagrange equations and classification of ground states}
\label{sec:class_phase}
The Euler-Lagrange equation associated to the  minimization  problem  \eqref{ground}  reads as
\be\label{eq:Euler_Lagran_eq1}
\left\{
\begin{split}
(\mu \pm 2\lambda)\phi_{\pm 2}
&=\left(\textcolor{black}{H_\rho} \pm2 \beta_1F_z\right)\phi_{\pm 2}+
\beta_1F_{\mp}\phi_{\pm1}+\frac{\beta_2}{\sqrt{5}}A_{00}\bar{\phi}_{\mp 2}, 	\\[-0.3cm] 
(\mu \pm \lambda)\phi_{\pm 1}
&=\left(\textcolor{black}{H_\rho} \pm \beta_1 F_z\right)\phi_{\pm 1}
	+\beta_1\left(\frac{\sqrt{6}}{2}F_{\mp}\phi_{0}+F_{\pm}\phi_{\pm 2}\right)
	-\frac{\beta_2}{\sqrt{5}}A_{00}\bar{\phi}_{\mp 1},\\[-0.3cm] 
\mu \phi_{0}
&=\textcolor{black}{H_\rho} \phi_{0}+\frac{\sqrt{6}}{2}\beta_1\big(F_{+}\phi_{1}+F_{-}\phi_{-1}\big)
	+\frac{\beta_2}{\sqrt{5}}A_{00}\bar{\phi}_{0}.
\end{split}
\right.
\ee
{\color{black}
Here, $H_\rho=-\nabla^2/2+V(\bx)+\beta_0\rho$, $F_{-}=\bar{F}_{+}$, 
$\mu$ and  $\lambda$  are  Lagrange multipliers associated to the mass and magnetization  
constraints \eqref{mass}-\eqref{magnet}.}
Thus the ground state of \eqref{ground} can also be viewed as the  eigenfunction of the
 nonlinear  eigenvalue 
problem \eqref{eq:Euler_Lagran_eq1} with constraints
\eqref{mass}-\eqref{magnet}, which has the lowest energy among all eigenfunctions. 
Other eigenfunctions with higher energy are  called as excited states.

\textcolor{black}{As carried out in  \cite{BCY}, the ground state of \eqref{ground} 
is unique up to a phase-rotation, i.e.,}
$\Phi_g^1:=(\phi_2^{1,g},\phi_1^{1,g},\phi_0^{1,g},\phi_{-1}^{1,g},\phi_{-2}^{1,g})^T$ and 
$\Phi_g^2:=(\phi_2^{2,g},\phi_1^{2,g},\phi_0^{2,g},\phi_{-1}^{2,g},\phi_{-2}^{2,g})^T$ are regarded as 
the same if there exists a \textcolor{black}{constant vector} 
  $\bm{\alpha}:=(\alpha_2,\alpha_1,\alpha_0, \alpha_{-1},\alpha_{-2})^T
 =\left(e^{i (2\theta_1-\theta_0)},\, e^{i\theta_1},\, e^{i\theta_0}\right.$, $\left. e^{i (2\theta_0-\theta_1)},
 \, e^{i (3\theta_0-2\theta_1)}\right)^T$  with $\theta_0,\theta_1\in\mathbb{R}$ such that  
 $\phi_\ell^{1,g}=\alpha_\ell \phi_\ell^{2,g}$ for $\ell=-2,\cdots,2$. 
According to \cite{CY, KU2}, \textcolor{black}{the phase of} the ground state $\Phi_g$ of \eqref{ground} 
can be classified into three categories based on the values of $|F_+(\Phi_g)|$ and $|A_{00}(\Phi_g)|$: 
(i) ferromagnetic  \textcolor{black}{phase} if $|F_+(\Phi_g)|>0$ and $A_{00}(\Phi_g)=0$, 
(ii) nematic  \textcolor{black}{phase}  if $F_+(\Phi_g)=0$  and $|A_{00}(\Phi_g)|>0$, and 
(iii) cyclic  \textcolor{black}{phase}  if $F_+(\Phi_g)=A_{00}(\Phi_g)=0$. 
{\color{black} When $M=2$ or $-2$, the constraints \eqref{mass}-\eqref{magnet}  
only allow one component, i.e. $\phi_2$ or $\phi_{-2}$, to be nonzero. Therefore,  
 \eqref{ground} can be reduced to compute ground state of a single component BEC with  $\phi_2$ or $\phi_{-2}$, 
which has been well studied  \cite{BCL, BD, BJ, BT}.} 
In addition, if one replaces the magnetization $\mathcal{M}(\Phi)=M$ in \eqref{ground-const}
by $\mathcal{M}(\Phi)=-M$, it is easy to see that the ground state $\Phi_g:=
(\phi_2^g,\phi_1^g,\phi_0^g,\phi_{-1}^g,\phi_{-2}^g)^T$ of \eqref{ground} 
can be simply replaced by $\tilde{\Phi}_g:=
(\phi_{-2}^g,\phi_{-1}^g,\phi_0^g,\phi_{1}^g,\phi_{2}^g)^T$. 
Thus for the simplicity of notations and presentation,  
from now on, we only consider the magnetization $M\in[0,2)$.   

\medskip

\begin{remark}
\label{RM1.1} 
In the literature \cite{BCY,KU2}, instead of the ground state defined as the minimizer
of the energy function $\mathcal{E}(\Phi)$ under two constraints of the total 
mass conservation $\mathcal{N}(\Phi)=1$ and total magnetization $\mathcal{M}(\Phi)=M$ with 
$M\in[-2,2]$, i.e. \eqref{ground}, another type of ground state has also been 
studied. {\color{black}It} is defined as the minimizer
of  energy function $\mathcal{E}(\Phi)$ under only  the mass constraint \eqref{mass}, i.e. 
\be
\label{groundnorm}
\tilde{\Phi}_g :=\arg \min_{\|\Phi\|=1} \mathcal{E}(\Phi).
\ee
In fact, the above minimization problem can be obtained via the minimization problem \eqref{ground}
by further minimizing for $M\in[-2,2]$, i.e. 
\be
\label{groundnorm11}
\tilde{\Phi}_g :=\arg \min_{M\in[-2,2]}\Phi_g^M, \quad \hbox{\rm with}\quad 
\Phi_g^M :=\arg \min_{\Phi\in \mathbb{S}} \mathcal{E}(\Phi).
\ee
The Euler-Lagrange equation associated to the minimization  problem  \eqref{groundnorm} 
is given by
\be
\label{eq:Euler_Lagran_eq135}  
\left\{
\begin{split}
\mu\phi_{\pm 2}&=\left(\textcolor{black}{H_\rho} +\beta_0\rho \pm2 \beta_1F_z\right)\phi_{\pm 2}+
\beta_1F_{\mp}\phi_{\pm1}+\frac{\beta_2}{\sqrt{5}}A_{00}\bar{\phi}_{\mp 2}, 	\\[-0.3cm]
\mu \phi_{\pm 1}
&=\left(\textcolor{black}{H_\rho} +\beta_0\rho \pm \beta_1 F_z\right)\phi_{\pm 1}+\beta_1\left(\frac{\sqrt{6}}{2}F_{\mp}\phi_{0}+F_{\pm}\phi_{\pm 2}\right)
	-\frac{\beta_2}{\sqrt{5}}A_{00}\bar{\phi}_{\mp 1},\\[-0.3cm]
\mu \phi_{0}
&=\left (\textcolor{black}{H_\rho} +\beta_0\rho \right) \phi_{0}+\frac{\sqrt{6}}{2}\beta_1\big(F_{+}\phi_{1}+F_{-}\phi_{-1}\big)
	+\frac{\beta_2}{\sqrt{5}}A_{00}\bar{\phi}_{0},
\end{split}
\right.
\ee
\textcolor{black}{where $\mu$ is the Lagrange multiplier associated to the mass constraint \eqref{mass}.}
\end{remark}

\subsection{Ground states in a spatially uniform system}
In this setion, we consider a spin-2 BEC in a spatially  uniform system, i.e. the \textcolor{black}{GP functional \eqref{energy}} 
without potential (i.e.,  $V(\bx)\equiv 0$) on a bounded domain $\mathcal{D}$ with measure  
$|\mathcal{D}|=1$ and periodic boundary condition. 
\textcolor{black}{This will be applied directly in Section \ref{SMA}   to construct the so-called  
single mode approximation (SMA) of  the ground states in a spatial non-uniform system, which
will reduce significantly the difficulty and computational cost to obtain a ground state of spin-2 BECs.
In addition, in the parameter region where SMA is invalid, the ground states carried out here can 
 help build more efficient initial data for the algorithm proposed in Section  \ref{sec:algorithm}
 to accelerate its convergence.  }
%

 In  a spatially  uniform system, all  ground states are constants in  form of  \cite{KU2}
\be
\label{Uniform-GS}
\Phi(\bx)\equiv \bm{\xi}:=(\xi_2,\xi_1,\xi_0,\xi_{-1},\xi_{-2})^T,\quad \bx\in \overline{\mathcal{D}},
\ee 
where $\xi_j\in\mathbb{C}$  for $j=-2, -1, 0, 1, 2$. Plugging \eqref{Uniform-GS}
into \eqref{energy} with $V(\bx)\equiv 0$ and $\Phi(\bx)=\bm{\xi}$ and replacing 
$\mathbb{R}^d$ by $\mathcal{D}$, after a detailed computation, we obtain
\be
\label{energy_uniform}
\mathcal{E}(\Phi)=\mathcal{E}_U(\bm{\xi}):=\fl{1}{2} \left[\left(\beta_1 |\tau|^2+\fl{\beta_2}{5}|\delta|^2 \right) 
+\bigg(\beta_0 +\beta_1 M^2\bigg) \right]=: E\left(\tau,\delta \right),
\ee
where
\be
\label{formula_tau_delta}
\left\{
\begin{array}{l}
\tau:=\tau(\bm{\xi}) =F_{+}(\bm{\xi})=2\big(\bar{\xi}_{2}\xi_{1}+\bar{\xi}_{-1}\xi_{-2}\big)
     +\sqrt{6}\big(\bar{\xi}_{1}\xi_{0}+\bar{\xi}_{0}\xi_{-1}\big), \\[0.5em]
 \delta :=\delta(\bm{\xi})=A_{00}(\bm{\xi})=2\xi_2\xi_{-2}-2\xi_1\xi_{-1}+\xi_0^2.
\end{array}
\right.
\ee
{\color{black}Actually, $\tau$ and $\delta$ are precisely the quantities which will be used later to characterize ground states as either, ferromagnetic, nematic or cyclic.} 
The conservation of  {\em mass} \eqref{mass} and {\em magnetization }
 \eqref{magnet}  leads to:   
\be 
\label{conservation_uniform}
\left\{
\begin{array}{lr}
|\xi_2|^2+|\xi_{-2}|^2+|\xi_0|^2+|\xi_{1}|^2+|\xi_{-1}|^2=1, & \\[0.25em]
2(|\xi_2|^2-|\xi_{-2}|^2)+|\xi_{1}|^2-|\xi_{-1}|^2=M,  & 
\end{array}
\right.
\ee 
{\color{black}for given $M\in[0, 2)$.} Solving   \eqref{ground}-\eqref{ground-const} for  ground state
$\Phi_g(\bx)$
is now equivalent to solve  the following minimization problem for  minimizer
$\bm{\xi}_g:= (\xi^g_2,\xi^g_1,\xi^g_0,\xi^g_{-1},\xi^g_{-2})^T$ as
\be
\label{ground-uniform}
\quad \bm{\xi}_g :=\arg \min_{\bm{\xi}\in \mathbb{S}_C} \mathcal{E}_U(\bm{\xi}), 
\quad
 \mathbb{S}_C=\left \{ \bm{\xi}\in\mathbb{C}^5 \ \bigg|  \  \sum_{\ell=-2}^2 |\xi_{\ell}|^2=1, \  \sum_{\ell=-2}^2 \ell  |\xi_{\ell}|^2=M\right\}.
\ee
 
 In the following, we show that the minimization problem \eqref{ground-uniform} on  complex manifold 
$ \mathbb{S}_C$ can be reduced to a minimization problem on the real manifold 
$\mathbb{S}_R=\mathbb{S}_C\cap \mathbb{R}^5 $.

\medskip 

\begin{lem}
\label{lemma:equiv_tau_delta}
If $\bm{\xi}\in\mathbb{R}^5$, then the system \eqref{conservation_uniform}  has a real solution if and only if 
\be\label{sol_cond}
\tau^2(\bm{\xi})+4\,\delta^2(\bm{\xi})\leq 4-M^2.
\ee
\end{lem}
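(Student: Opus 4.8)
Since \eqref{conservation_uniform} forces $\rho:=\sum_\ell|\xi_\ell|^2=1$ and $F_z=M$, and since for real $\bm{\xi}$ one has $F_y=0$, $\tau=F_+\in\mathbb{R}$, $\delta=\sqrt5\,A_{00}$ and therefore $|\mathbf{F}|^2=\tau^2+F_z^2$, the condition \eqref{sol_cond} is equivalent to the homogeneous degree-$4$ inequality $|\mathbf{F}|^2+4\delta^2\le 4\rho^2$, i.e.
\[
\Delta(\bm{\xi}):=(2\rho-F_z)(2\rho+F_z)-\tau^2-4\delta^2\ \ge\ 0 .
\]
The plan is to read the lemma as a bound (necessity: every real $\bm{\xi}$ obeying \eqref{conservation_uniform} satisfies \eqref{sol_cond}) together with a realizability statement (sufficiency: every $(\tau_0,\delta_0)$ with $\tau_0^2+4\delta_0^2\le4-M^2$ is attained), and to establish the former through $\Delta\ge0$.

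For necessity I would prove $\Delta\ge 0$ for all $\bm{\xi}\in\mathbb{R}^5$ by treating $\Delta$ as a quadratic polynomial in the pair $(\xi_2,\xi_{-2})$ with coefficients depending on $(\xi_1,\xi_0,\xi_{-1})$. Its top-degree part is the manifestly nonnegative $8\xi_0^2(\xi_2-\xi_{-2})^2+12(\xi_2\xi_{-1}+\xi_1\xi_{-2})^2$, so the associated $2\times2$ Hessian $G$ is positive semidefinite with $\det G=96\,\xi_0^2(\xi_1+\xi_{-1})^2$. Minimizing over $(\xi_2,\xi_{-2})$ gives $R-\tfrac14\mathbf{L}^{\top}G^{-1}\mathbf{L}$, where $\mathbf{L}$ and $R$ are the linear and constant parts; the decisive point is that this Schur complement vanishes identically, because $R=3(\xi_1^2-\xi_{-1}^2)^2+2\xi_0^2(\xi_1+\xi_{-1})^2$ cancels $\tfrac14\mathbf{L}^{\top}G^{-1}\mathbf{L}$ exactly. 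Equivalently, one obtains the explicit sum of two squares
\[
\Delta=\tfrac12\Big(4\xi_0(\xi_2-\xi_{-2})+\sqrt6\,(\xi_{-1}^2-\xi_1^2)\Big)^2+12\Big(\xi_2\xi_{-1}+\xi_1\xi_{-2}-\tfrac{1}{\sqrt6}\,\xi_0(\xi_1+\xi_{-1})\Big)^2 ,
\]
which settles necessity and pins down the equality locus $(\Delta=0)$. I expect discovering this two-square form — the exact cancellation killing the Schur complement — to be the main obstacle: the rotation-invariant bound available from positivity of the two-particle total-spin projectors only yields the weaker $|\mathbf{F}|^2+2\delta^2\le4\rho^2$, and the sharp coefficient $4$ truly needs the reality of $\bm{\xi}$.

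For sufficiency I would realize any admissible $(\tau_0,\delta_0)$, first reducing to $\tau_0=0$. A sign flip $\xi_{\pm1}\mapsto-\xi_{\pm1}$ lets me assume $\tau_0\ge0$, and the real orthogonal spin rotation $e^{-i\theta F_y}$ (its Wigner matrix is real, so it maps $\mathbb{R}^5$ to itself) leaves $\rho$ and the spin singlet $\delta$ invariant while rotating $\mathbf{F}$ within the $xz$-plane and preserving $|\mathbf{F}|$. Choosing $\theta$ with $(m_0\sin\theta,m_0\cos\theta)=(\tau_0,M)$ and $m_0:=\sqrt{M^2+\tau_0^2}\le2$ reduces the task to producing a real $\bm{\xi}$ with $\rho=1$, $F_z=m_0$, $\tau=0$ and $\delta=\delta_0$. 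For this I take the three-component ansatz $\xi_1=\xi_{-1}=0$, which forces $\tau\equiv0$, $F_z=2(\xi_2^2-\xi_{-2}^2)$ and $\delta=2\xi_2\xi_{-2}+\xi_0^2$; writing $\xi_2^2+\xi_{-2}^2=t$ and allowing both signs of $\xi_2\xi_{-2}$ gives $\delta=\pm\sqrt{t^2-(m_0/2)^2}+(1-t)$, which by the intermediate value theorem sweeps the full interval $|\delta|\le\tfrac12\sqrt{4-m_0^2}$ as $t$ ranges over $[m_0/2,1]$. Since $4-m_0^2=4-M^2-\tau_0^2$, this is exactly the admissible range, so $\delta_0$ is attained; rotating back yields the required $\bm{\xi}$ and closes the equivalence.
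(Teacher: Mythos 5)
Your proposal is correct, and it takes a genuinely different route from the paper. The paper proves both directions by brute-force elimination: it writes $\xi_{-2}-\xi_2=p\cos\theta$, $\xi_1+\xi_{-1}=p\sin\theta$ with $p=\sqrt{1-\delta}$ (using $(\xi_2-\xi_{-2})^2+(\xi_1+\xi_{-1})^2=1-\delta$), splits into four cases ($\delta=1$; $\cos\theta=0$; $\sin\theta=0$; $\sin2\theta\neq0$), and in each case reduces \eqref{conservation_uniform} to a quadratic in a single unknown whose discriminant is nonnegative precisely when \eqref{sol_cond} holds; sufficiency re-uses the same formulas, choosing $\theta=\pi-\arctan(\tau/M)$ so that $M\sin\theta=-\tau\cos\theta$ when $M\neq0$. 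You instead get necessity from the single homogeneous identity
\[
4\rho^2-F_z^2-\tau^2-4\delta^2=\tfrac12\Big(4\xi_0(\xi_2-\xi_{-2})+\sqrt6\,(\xi_{-1}^2-\xi_1^2)\Big)^2+12\Big(\xi_2\xi_{-1}+\xi_1\xi_{-2}-\tfrac{1}{\sqrt6}\,\xi_0(\xi_1+\xi_{-1})\Big)^2,
\]
which I have verified by direct expansion (it is exact, including the cross terms in $\sqrt6$), and sufficiency from the $SO(3)$ reduction — real Wigner rotation $e^{-i\theta F_y}$, rotation invariance of $\delta$, vector transformation of $\mathbf{F}$ — down to the ansatz $\xi_{\pm1}=0$ plus the intermediate value theorem; this is also sound, with $m_0=\sqrt{M^2+\tau_0^2}\le2$ guaranteed by admissibility and the two branches $\pm\sqrt{t^2-m_0^2/4}+(1-t)$ indeed covering all of $|\delta_0|\le\tfrac12\sqrt{4-m_0^2}$. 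What each approach buys: yours is stronger and cleaner on the necessity side (the inequality holds for every real $\bm{\xi}$ in homogeneous form, with the equality locus visible) and avoids all case analysis; the paper's cases, however, are not dead weight, because the explicit per-case formulas \eqref{eq:xi0}--\eqref{AppA_CaseII_3}, \eqref{AppA_CaseIII_1}, \eqref{AppA_CaseIV_1}--\eqref{AppA_CaseIV_3} are exactly what is reused after lemma \ref{lemma:exist_real_sol} to write down the ground states in lemmas \ref{lem:gs_ferr}--\ref{lem:gs_cyc}, whereas your construction produces solutions in rotated form and leans on representation-theoretic facts that would need to be stated and justified. Two cautions: present the two-square identity as a directly verified polynomial identity rather than as the output of the Schur-complement computation, since the Hessian $G$ is singular whenever $\xi_0(\xi_1+\xi_{-1})=0$ and the minimization narrative degenerates there (the identity itself covers all cases); and your aside that the sharp coefficient $4$ ``truly needs the reality of $\bm{\xi}$'' is questionable, since the proof of lemma \ref{lemma:exist_real_sol} establishes $|\tau|^2+4|\delta|^2+F_z^2\le4$ on the complex unit sphere as well — though nothing in your argument depends on that remark.
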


\begin{proof} Firstly, we prove the necessary condition, i.e. we assume $\bm{\xi}\in\mathbb{R}^5$ and 
the system \eqref{conservation_uniform}  has a real solution. 
 By \eqref{formula_tau_delta} and \eqref{conservation_uniform}, we have
\be\label{a1}
(\xi_2-\xi_{-2})^2+(\xi_1+\xi_{-1})^2=1-\delta.
\ee
Noticing  $ |\delta|\leq\sum_{\ell=-2}^{2}\xi_\ell^2=1$ and  denoting 
$p=\sqrt{1-\delta}\geq0$, then there exists a  constant $\theta\in[0,2\pi)$ such that
\be
\label{assume}
\left\{
\begin{array}{l}
\xi_{-2}-\xi_2=p \cos{\theta},   \\[0.5em]
\xi_1+\xi_{-1}=p \sin{\theta},
\end{array}
\right.\quad
\Longleftrightarrow
\quad
\left\{
\begin{array}{l}
\xi_{-2}=p \cos{\theta} + \xi_2,   \\[0.5em]
\xi_{-1}=p \sin{\theta} -\xi_1.
\end{array}\right.
\ee
We then prove \eqref{sol_cond} is valid by considering four different parameter cases. 
To simplify the presentation, in the following, we only state the formula for $\xi_0$, $\xi_1$ and $\xi_2$,
and the expressions of  $\xi_{-1}$ and $\xi_{-2}$ can be obtained directly from \eqref{assume}.

\medskip

\noindent Case (i). If $\delta=1$, then $p=0$. From \eqref{assume}, we obtain $\xi_2=\xi_{-2}$ and $\xi_1=-\xi_{-1}$.  By \eqref{conservation_uniform} and  \eqref{formula_tau_delta},
we have 
\bes
M=\tau=0,    \quad \Longrightarrow   \quad	\tau^2+4\delta^2 \leq 4-M^2. 
\ees

\noindent Case (ii).  If $\delta<1$ (and thus $p> 0$) and $\cos\theta=0$, {\color{black}then $\xi_{-2}=\xi_2$ and  $\xi_{-1}=p -\xi_1$. Plugging them into \eqref{formula_tau_delta} and \eqref{conservation_uniform}, by a simple calculation, we obtain 
\bea
\label{eq:xi0}
&&\xi_0=(\tau-2p\xi_2)/(p\sqrt{6}),\\[0.25em]
\label{eq:xi1}
&&2 |\xi_2|^2+|\xi_0|^2+ (M+p^2)^2/(2p^2)=1+M.
\eea
Thereby combining  \eqref{eq:xi0} and  \eqref{eq:xi1}, we have
\be
\label{AppA_CaseII_3}
16p^2\xi_2^2-4p \tau \xi_2+3p^4-6p^2+3M^2+\tau^2=0.
\ee
}
Note that \eqref{AppA_CaseII_3} has a real solution if and only if  the corresponding discriminant 
\be
\label{AppA_CaseII_4}
\Delta:=-48p^2(4\delta^2+\tau^2+4M^2-4)\geq0,
\ee
which immediately implies  
\be
\label{AppA_CaseII_5}
 \tau^2+4\delta^2\leq4-4M^2\leq4-M^2. 
\ee

\noindent Case (iii).  If $\delta<1$ (and thus $p> 0$) and $\sin\theta=0$.  Similarly,
 we obtain 
 \be
 \label{AppA_CaseIII_1}
  \xi_2=\frac{(2p^2+M)\cos\theta}{4p},\quad
  \xi_1=\frac{\tau\cos\theta}{2p}, \quad 
  \xi_0^2=\frac{4-M^2-4\delta^2-4\tau^2}{8p^2}.
\ee
Thus $\xi_0$ is real  if and only if $4-M^2-4\delta^2-4\tau^2\geq0$, which again implies 
\bes
\tau^2+4\delta^2\leq 4-M^2-3\tau^2\leq4-M^2.
\ees

\noindent Case (iv).  If $\delta< 1$ and $\sin(2\theta)\neq0$.
Plugging \eqref{assume}  into  \eqref{conservation_uniform}, we have 
 \bea
 \label{AppA_CaseIV_1}
&&\qquad \xi_2=-(M+2p^2-p^2\sin^2\theta-2p\sin\theta\xi_1)/(4p\cos\theta),	      \\[0.5em]
\label{AppA_CaseIV_2}
&&\qquad \xi_0=\fl{4-6\sin^2\theta}{\sqrt{6}\sin(2\theta)}\,\xi_1 +\fl{M-2p^2+3p^2\sin^2\theta}{2\sqrt{6}p\cos\theta}
+\fl{\tau}{\sqrt{6}p\sin\theta},\\  [0.5em]
\label{AppA_CaseIV_3}
&&
\qquad A \xi_1^2 + B \xi_1+C=0,
\eea
with
\[
\begin{split}
A=&8 \csc^2(2\theta),\\
B=&\fl{ \csc^2\theta \sec^2\theta}{4p}\Big[5\tau\cos\theta+3\tau\cos(3\theta)-5M\sin \theta-8p^2\sin \theta+3M\sin(3\theta)\Big],\\
C=&\fl{1}{2p^2}\Big[\tau^2\csc^2\theta+(M-2p^2)\tau \csc\theta \sec \theta - \frac{1}{2}\sec^2\theta \big(-2M^2+6\delta p^2-Mp^2+p^4\\
&+3p^2(2\delta+M+p^2)\cos(2\theta) -3p^2\tau \sin(2\theta)\big)
\Big].
\end{split}
\]
Similarly, \eqref{AppA_CaseIV_3}  has a real solution if and only if  the corresponding discriminant 
\be
\label{Delta_AppA_CaseIV_4}
\Delta = -\frac{3\csc^2\theta \sec^2 \theta}{p^2} \;\Big[3(M\sin\theta+\tau\cos\theta)^2+4\delta^2+\tau^2+M^2-4\Big]\geq0,
\ee
which implies 
\bes
 4-M^2-(4\delta^2+\tau^2)\ge3(M\sin\theta+\tau\cos\theta)^2\ge0 \quad \Longrightarrow 
 \quad \tau^2+4\delta^2\leq4-M^2.
\ees

Secondly, we prove the sufficient condition, i.e. we assume \eqref{sol_cond} is valid. 
When $\delta=1$, then $M=\tau=0$, and thus $\xi=(0,0,1,0,0)^T$ is a real solution of 
\eqref{conservation_uniform}; when  $\delta<1$ and  $M=0$, then \eqref{AppA_CaseII_4} is satisfied, 
thus \eqref{eq:xi0}-\eqref{AppA_CaseII_3} is a real solution; and  when $\delta<1$ and $M\ne0$, by choosing 
$\theta=\pi-\arctan(\tau/M)$ such that $M\sin\theta=-\tau\cos\theta$ and \eqref{Delta_AppA_CaseIV_4} is fulfilled,
then \eqref{AppA_CaseIV_1}-\eqref{AppA_CaseIV_3} is a real solution. \hfill
\end{proof}

\bigskip

\begin{lem}
\label{lemma:exist_real_sol} 
For $\forall\, \bm{\xi} \in \mathbb{S}_C$, we have
\be
\label{lemma_inequality2}
|\tau(\bm{\xi})|^2+4\,|\delta(\bm{\xi})|^2\leq 4-M^2,
\ee
{\color{black} and}
there exists a $\bm{\zeta}_R\in \mathbb{S}_R$ such that $\mathcal{E}_{U}(\bm{\zeta}_R)=\mathcal{E}_{U}(\bm{\xi})$. Therefore, the minimization problem \eqref{ground-uniform} has at least a real ground state. 
 
\end{lem}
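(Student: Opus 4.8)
The plan is to handle the two assertions in order, noting first that $\mathcal{E}_U$ depends on $\bm{\xi}$ only through $|\tau(\bm{\xi})|^2$ and $|\delta(\bm{\xi})|^2$. Granting the inequality \eqref{lemma_inequality2}, the second assertion is immediate: setting $\tilde{\tau}=|\tau(\bm{\xi})|\ge0$ and $\tilde{\delta}=|\delta(\bm{\xi})|\ge0$, the pair $(\tilde\tau,\tilde\delta)$ satisfies $\tilde\tau^2+4\tilde\delta^2\le 4-M^2$, so the sufficiency half of Lemma \ref{lemma:equiv_tau_delta} produces a real $\bm{\zeta}_R\in\mathbb{S}_R$ with $\tau(\bm{\zeta}_R)=\tilde\tau$ and $\delta(\bm{\zeta}_R)=\tilde\delta$; since the energy sees only the moduli, $\mathcal{E}_U(\bm{\zeta}_R)=E(\tilde\tau,\tilde\delta)=E(\tau(\bm{\xi}),\delta(\bm{\xi}))=\mathcal{E}_U(\bm{\xi})$. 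Consequently $\inf_{\mathbb{S}_C}\mathcal{E}_U=\inf_{\mathbb{S}_R}\mathcal{E}_U$, and since $\mathbb{S}_R$ is a nonempty compact subset of $\mathbb{R}^5$ (nonempty by Lemma \ref{lemma:equiv_tau_delta} with $\tau=\delta=0$) and $\mathcal{E}_U$ is continuous, the minimum is attained at a real point, which is then a real ground state. Thus everything reduces to proving \eqref{lemma_inequality2}.

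For the inequality I would first recast it in an invariant form. Using the magnetization constraint $F_z(\bm{\xi})=M$ together with $|\mathbf{F}|^2=|F_+|^2+F_z^2=|\tau|^2+M^2$, the claim \eqref{lemma_inequality2} is equivalent to $|\mathbf{F}(\bm{\xi})|^2+4|\delta(\bm{\xi})|^2\le4$. The key point is that both quantities are invariant under spin rotations: $\mathbf{F}$ transforms as a vector, so $|\mathbf{F}|$ is invariant, and $\delta=\sqrt{5}\,A_{00}$ is the spin-singlet amplitude, hence a rotational scalar. I would therefore rotate $\bm{\xi}$ to an auxiliary state $\bm{\xi}'$ whose spin expectation points along $\hat{z}$, so that $F_+(\bm{\xi}')=0$ and $F_z(\bm{\xi}')=|\mathbf{F}(\bm{\xi})|$, while $|\delta(\bm{\xi}')|=|\delta(\bm{\xi})|$. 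It then suffices to prove, for \emph{any} normalized vector, the reduced bound $F_z^2+4|\delta|^2\le4$.

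This reduced bound is an inequality among moduli. Writing $n_m=|\xi_m|^2$ one has the exact value $F_z=2n_2+n_1-n_{-1}-2n_{-2}$ and, by the triangle inequality, $|\delta|\le Q:=2\sqrt{n_2n_{-2}}+2\sqrt{n_1n_{-1}}+n_0$. Putting $x=\sqrt{n_2},\,y=\sqrt{n_1},\,z=\sqrt{n_0},\,u=\sqrt{n_{-1}},\,v=\sqrt{n_{-2}}$ and $S=x^2+y^2+z^2+u^2+v^2=1$, a short computation factors
\[
S^2-Q^2=\big[(x-v)^2+(y-u)^2\big]\,\big[(x+v)^2+(y+u)^2+2z^2\big],
\]
while $F_z/2=(x-v)(x+v)+\tfrac12(y-u)(y+u)$. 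By Cauchy--Schwarz,
\[
\big(\tfrac{F_z}{2}\big)^2\le\big[(x-v)^2+(y-u)^2\big]\,\big[(x+v)^2+\tfrac14(y+u)^2\big]\le S^2-Q^2,
\]
so that $F_z^2/4+Q^2\le S^2=1$, and hence $F_z^2/4+|\delta|^2\le1$. Applying this to $\bm{\xi}'$ and inserting $F_z(\bm{\xi}')=|\mathbf{F}(\bm{\xi})|$ and $|\delta(\bm{\xi}')|=|\delta(\bm{\xi})|$ yields $|\mathbf{F}(\bm{\xi})|^2+4|\delta(\bm{\xi})|^2\le4$, which is exactly \eqref{lemma_inequality2}.

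I expect the main obstacle to be precisely the step the rotation is designed to circumvent: if one bounds $|F_+|$ and $|\delta|$ separately by their moduli and adds, the estimate fails, because the phase configuration maximizing $|F_+|$ and that maximizing $|\delta|$ are incompatible (for instance at $\bm{\xi}$ with $n_{\pm2}=n_{\pm1}=\tfrac14$, $n_0=0$ the two moduli bounds give $|F_+|^2+4|\delta|^2\le5>4$). Rotating to $F_+=0$ decouples them, leaving only a single modulus bound on $\delta$ with $F_z$ kept exact. The two places needing care are therefore (i) justifying the rotational invariance of $A_{00}$, i.e.\ the $SO(3)$ action on the five-component spinor, and (ii) choosing the correct Cauchy--Schwarz grouping above; the remaining bookkeeping, together with the passage back to the second assertion, is routine.
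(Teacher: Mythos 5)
Your proposal is correct, but your route to the key inequality \eqref{lemma_inequality2} is genuinely different from the paper's. Both arguments end up relaxing the magnetization constraint and bounding the same rotationally invariant quantity $|\tau|^2+F_z^2+4|\delta|^2$ on the unit sphere $\mathbb{S}_1$; the paper does this by a case split (for $\delta(\bm{\xi})=0$ it applies Lemma \ref{lemma:equiv_tau_delta} to the vector of moduli; for $\delta(\bm{\xi})\neq0$ it sets up the auxiliary variational problem \eqref{proof_min}, writes its Euler--Lagrange equation, and contracts with $\bar{\bm{\zeta}}_g$ and the flipped vector $\bm{\eta}_g=(\zeta^g_{-2},-\zeta^g_{-1},\zeta^g_0,-\zeta^g_1,\zeta^g_2)^T$ to get $(\lambda_{\bm{\zeta}_g}+8)\delta=0$, hence $\lambda_{\bm{\zeta}_g}=-8$ and a maximum value of $4$), whereas you use the $SO(3)$ covariance of $\mathbf{F}$ and the invariance of the singlet amplitude $A_{00}$ to rotate into a frame where $F_+=0$, and then prove the reduced bound $F_z^2+4|\delta|^2\le 4$ by the exact factorization $S^2-Q^2=(S-Q)(S+Q)$ plus a Cauchy--Schwarz estimate; your algebra checks out, as does your observation that separate modulus bounds on $|F_+|$ and $|\delta|$ are too lossy to give $4$. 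As to what each approach buys: your argument is elementary and avoids variational machinery, and it quietly sidesteps a delicate point in the paper's step \eqref{proof_eq}--\eqref{proof_con} --- concluding $\lambda_{\bm{\zeta}_g}=-8$ from $(\lambda_{\bm{\zeta}_g}+8)\delta=0$ tacitly assumes $\delta(\bm{\zeta}_g)\neq0$ at the auxiliary maximizer, which the paper does not verify; conversely, the paper never needs any group action on the five-component spinor, while you must actually justify that $\mathbf{F}$ transforms as a vector and $A_{00}$ as a scalar under the spin-2 rotation $U(R)$ (standard, e.g. via invariance of the spin-singlet bilinear form for integer spin, but it should be spelled out since the paper provides no such apparatus). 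The remaining pieces --- producing $\bm{\zeta}_R$ from the sufficiency half of Lemma \ref{lemma:equiv_tau_delta} together with the fact that $\mathcal{E}_U$ sees only $(|\tau|,|\delta|)$, then existence of a real minimizer --- match the paper, except that you conclude by compactness of $\mathbb{S}_R$ while the paper reduces to the two-dimensional quadratic minimization \eqref{mpreduced}; both are valid.
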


\medskip

\begin{proof} 
We prove this lemma by considering two different cases.

\noindent Case (i). If $\delta(\bm{\xi})=0$,   noticing that 
\bes
\bm{\xi} \in \mathbb{S}_C \qquad \Longrightarrow \qquad  \bm{\zeta}:=\left(|\xi_2|,|\xi_1|,|\xi_0|,|\xi_{-1}|,|\xi_{-2}|\right)^T \in \mathbb{S}_R,
\ees
 hence the system 
\eqref{conservation_uniform} is fulfilled for $\bm{\zeta}$. By lemma \ref{lemma:equiv_tau_delta}, we have  
\bes
|\tau(\bm{\xi})|^2+ 4|\delta(\bm{\xi})|^2 =|\tau(\bm{\xi})|^2 \le \tau^2(\bm{\zeta}) \le \tau^2(\bm{\zeta})+4\delta^2(\bm{\zeta}) \le 4-M^2.
\ees

\noindent  Case (ii). If $\delta(\bm{\xi})\ne0$,  noticing that
\be\label{proof_inequality2}
\begin{split}
\max_{\bm{\xi}\in \mathbb{S}_C}\left\{|\tau(\bm{\xi})|^2+4|\delta(\bm{\xi})|^2+M^2 \right\}
=&\max_{\bm{\xi}\in \mathbb{S}_C}\left\{|\tau(\bm{\xi})|^2+4|\delta(\bm{\xi})|^2+F_z^2(\bm{\xi}) \right\} \\
\le&\max_{\bm{\zeta}\in \mathbb{S}_1}\left\{|\tau(\bm{\zeta})|^2+4 |\delta(\bm{\zeta})|^2+F_z^2(\bm{\zeta}) \right\}, 
\end{split}
\ee
with $\mathbb{S}_1=\left \{ \bm{\zeta}\in\mathbb{C}^5 \ |  \  \sum_{\ell=-2}^2 |\zeta_{l}|^2=1\right \}$, thus to prove   \eqref{lemma_inequality2}, we only need to show 
\be
\label{proof_inequality3}
\max_{\bm{\zeta}\in \mathbb{S}_1}\left\{|\tau(\bm{\zeta})|^2+4|\delta(\bm{\zeta})|^2+F_z^2(\bm{\zeta}) \right\} \le 4.
\ee
Consider an auxiliary minimization problem 
\be
\label{proof_min}
\bm{ \zeta}_g:=(\zeta_2^g,\zeta_1^g,\zeta^g_0,\zeta_{-1}^g,\zeta_{-2}^g)^T=\arg \min_{\bm{\zeta}\in \mathbb{S}_1} \mathcal{F}(\bm{\zeta}), 
\ee
where  the auxiliary functional $\mathcal{F}(\bm{\zeta})$ is defined as
\be
\label{ENERGY}
\mathcal{F}(\bm{\zeta})=- \big[ |\tau(\bm{\zeta})|^2+4|\delta(\bm{\zeta})|^2+F_z^2(\bm{\zeta}) \big].
\ee
It is clear that 
\be
\label{aux_energy}
\mathcal{F}(\bm{\zeta}_g)=- \max_{\bm{\zeta}\in \mathbb{S}_1} \left\{|\tau(\bm{\zeta})|^2+4|\delta(\bm{\zeta})|^2+F_z^2(\bm{\zeta}) \right\} ,
 \ee
 and $\bm{\zeta}_g$ satisfies the  Euler-Lagrange equation 
$\nabla_{\bar{\bm{\zeta}}}\,\mathcal{F}(\bm{\zeta})=\lambda_{\bm{\zeta}}\, \bm{\zeta}$
with $\lambda_{\bm{\zeta}}\in\mathbb{R}$ being the Lagrange multiplier 
and $\bar{\bm{\zeta}}$ being the complex conjugate of $\bm{\zeta}$, 
i.e. 
$\nabla_{\bar{\bm{\zeta}}}\,\mathcal{F}(\bm{\zeta}_g)=\lambda_{\bm{\zeta}_g}\,\bm{\zeta}_g$.  Hence,  
by denoting $\bm{\eta}_g=(\zeta^g_{-2},-\zeta^g_{-1},\zeta^g_0,-\zeta^g_1,\zeta^g_2)^T$,  we have
\be
\label{proof_eq}
\left\{
\begin{array}{l}
\bar{\bm{\zeta}_g}\cdot\nabla_{\bar{\bm{\zeta}}}\,\mathcal{F}(\bm{\zeta}_g)=\lambda_{\bm{\zeta}_g}, \\[0.5em]
\bm{\eta}_g\cdot\nabla_{\bar{\bm{\zeta}}}\,\mathcal{F}(\bm{\zeta}_g)=
\lambda_{\bm{\zeta}_g}\,\bm{\eta}_g\cdot\bm{\zeta}_g,
\end{array}
\right.
\quad
\Longrightarrow
\;\quad
\left\{
\begin{array}{l}
-2(|\tau|^2+4|\delta|^2+F_z^2)=\lambda_{\bm{\zeta}_g}, \\[0.5em]
(\lambda_{\bm{\zeta}_g}+8)\delta=0,
\end{array}
\right.
\ee
which leads to
\be
\label{proof_con}
\left\{
\begin{array}{l}
\lambda_{\bm{\zeta}_g}=-8, \\[0.5em]
 |\tau(\bm{\zeta}_g)|^2+4|\delta(\bm{\zeta}_g)|^2+F_z^2(\bm{\zeta}_g)=4,
\end{array}
\right.
\quad
\Longrightarrow
\;\quad
 \mathcal{F}(\bm{\zeta}_g)=-4.
\ee
Noticing \eqref{aux_energy}, one gets \eqref{proof_inequality3}. 
This, together with \eqref{proof_inequality2}, concludes the desired inequality 
\eqref{lemma_inequality2}.
{\color{black}Therefore, $\forall\,\bm{\xi}\in\mathbb{S}_C$, let $\tau(\bm{\zeta}_R)=|\tau(\bm{\xi})|$ and $\delta(\bm{\zeta}_R)=|\delta(\bm{\xi})|$, we have 
\[
\tau^2(\bm{\zeta}_R)+4\delta^2(\bm{\zeta}_R)\leq 4-M^2. 
\]
According to lemma \ref{lemma:equiv_tau_delta}, the system \eqref{conservation_uniform} has a real solution $\bm{\zeta}_R\in\mathbb{S}_R$, which satisfies $\mathcal{E}_{U}(\bm{\zeta}_R)=\mathcal{E}_{U}(\bm{\xi})$.}
{\color{black}One immediately obtains}
\be
\label{eq:engequivalent}
\min_{\bm{\xi}\in \mathbb{S}_C} \mathcal{E}_U(\bm{\xi})= \min_{\bm{\zeta}_R \in \mathbb{S}_R}\mathcal{E}_{U}(\bm{\zeta}_R)
= \min_{S_{\tau,\delta}} \; E(\tau,\delta)=\frac{1}{2}(\beta_0+\beta_1M^2)+\frac{1}{2}\; \min_{S_{\tau,\delta}}\;f(\tau,\delta),
\ee
where  
\be\label{ftaudelta}
f(\tau,\delta)=\beta_1 \tau^2+\frac{\beta_2}{5}\delta^2 \quad \mbox{and} \quad
S_{\tau,\delta}:=\left\{\tau\in{\mathbb R},\;
\delta\in{\mathbb R},\;\tau^2+4\delta^2\le 4-M^2\right\}.
\ee
Obviously, for any given $\beta_1$ and $\beta_2$, the {\color{black}quadratic} minimization problem
\be 
\label{mpreduced}
(\tau_g,\delta_g):=\arg\; \min_{S_{\tau,\delta}}\;f(\tau,\delta)
\ee
possesses at least a solution {\color{black}over the elliptic region $S_{\tau,\delta}$}. As a result, the minimization problem \eqref{ground-uniform} has at least a real ground state. \hfill 
\end{proof}

\medskip

Thanks to lemmas \ref{lemma:equiv_tau_delta}-\ref{lemma:exist_real_sol}, noticing
\eqref{energy_uniform}, to find the ground state $\bm{\xi}_g$ of \eqref{ground-uniform} is then reduced 
to find minimizers of the minimization problem \eqref{mpreduced}.
Actually, it can be solved analytically since it is to find minimizers of a quadratic function over an elliptic region.
To illustrate this,  Fig. \ref{fig:phase_specific} shows  contour  plots of the energy $E(\tau,\delta)$ 
for  $\beta_0=M=0$ with different  $\beta_1$ \& $\beta_2$  on the elliptic region {\color{black}$S_{\tau,\delta}$}.
From the figure, one can obtain the minimizers $(\tau_g,\delta_g)$. 
\begin{figure}[htbp!]
\centerline{
\psfig{figure=./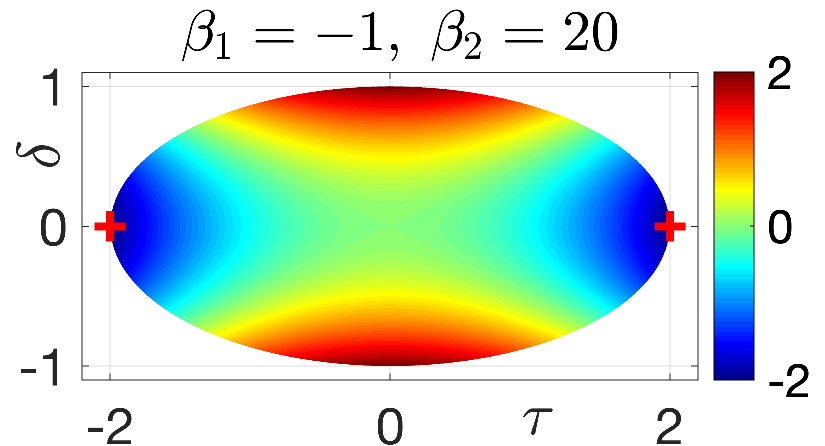,width=1.7in,height=1.3in}
\psfig{figure=./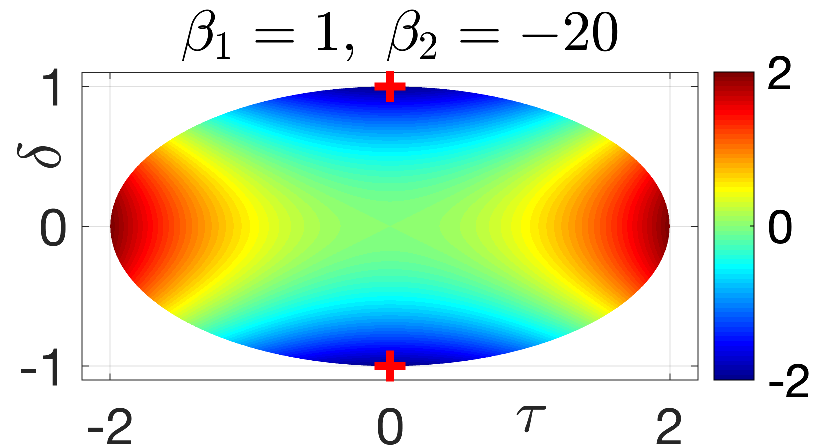,width=1.7in,height=1.3in}
\psfig{figure=./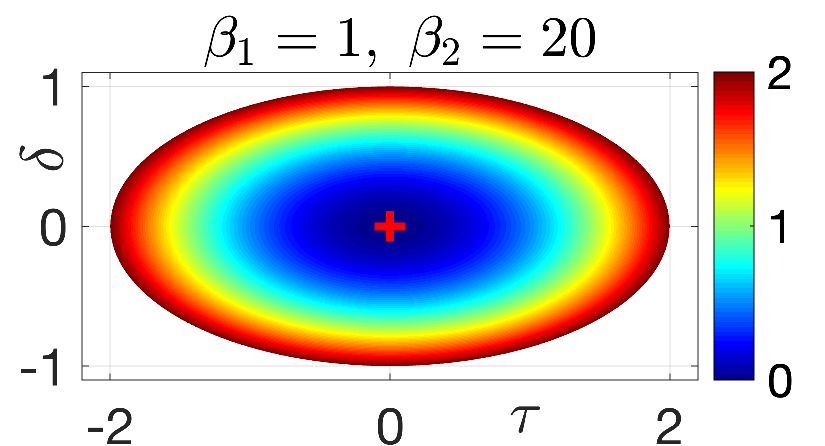,width=1.7in,height=1.3in}
}
\caption{Contour  plots of the energy $E(\tau,\delta)$ in {\color{black}\eqref{eq:engequivalent}} for  $\beta_0=M=0$ with different  $\beta_1=-1$ and $\beta_2=20$ (left), $\beta_1=1$ and $\beta_2=-20$ (middle), 
and $\beta_1=1$ and $\beta_2=20$ (right). $\textcolor{red}{`+'}$ denote those points where the minimum value of $E(\tau,\delta)$  are achieved. According to the value (and position on the graph) of those points $\textcolor{red}{`+'}$, one can immediately conclude the phase of the corresponding ground state
are in ferromagnetic (left), nematic (middle) and cyclic (right).}
\label{fig:phase_specific} 
\end{figure}

Noticing that if $\bm{\xi}$ solves \eqref{formula_tau_delta}-\eqref{conservation_uniform} 
with  $\tau$, then $\tilde{\bm{\xi}}=(\xi_2,-\xi_1,\xi_0,-\xi_{-1},\xi_{-2})^T$ solves
 \eqref{formula_tau_delta}-\eqref{conservation_uniform} with the parameter  $-\tau$. 
Thus, it suffices to assume $\tau \geq 0$ for simplicity hereafter. 
\textcolor{black}{To  solve out analytically the minimization problem \eqref{mpreduced} on the elliptic region $S_{\tau,\delta}$, we adapt the following elliptic-polar coordinates as
\be\label{elliptcoor}
\tau=r \cos\theta,	\quad 
\delta=\fl{1}{2}r \sin\theta,\quad 
\theta\in\left[-\fl{\pi}{2}, \fl{\pi}{2}\right), \quad  
r \in \left[0,\sqrt{4-M^2}\; \right],
\ee
in which coordinate the function $f(\tau,\delta)$ \eqref{ftaudelta} reads as follows: 
\bes
\label{ftaudelta1}
\begin{split}
f(\tau,\delta)&=\beta_1 \tau^2+\frac{\beta_2}{5}\delta^2
=\beta_1 r^2\cos^2\theta+\frac{\beta_2}{20}r^2\sin^2\theta
=\frac{\beta_2}{20}r^2+
\left(\beta_1-\fl{\beta_2}{20}\right)r^2 \cos^2\theta, \\
&=\beta_1r^2+\left(\fl{\beta_2}{20}-\beta_1\right)r^2 \sin^2\theta 
\qquad\quad \theta \in \left[-\fl{\pi}{2}, \fl{\pi}{2}\right), 
\quad  r\in  \left[0,\sqrt{4-M^2}\; \right].
\end{split}
\ees
Therefore,  it is easy to obtain that: 
(i) when  $\beta_1<0$ and $\beta_2>20\beta_1$,  from the last equality, $f(\tau,\delta)$ attains its minimum when  $\sin^2\theta=0$ and $r=\sqrt{4-M^2}$, i.e. at $(\tau_g,\delta_g)=(\sqrt{4-M^2}, 0)$;
(ii) when $\beta_2<0$ and $\beta_2<20\beta_1$, from the third equality, $f(\tau,\delta)$ attains its minimum when  $\cos^2\theta=0$ and $r=\sqrt{4-M^2}$, i.e. at $(\tau_g,\delta_g)=(0,\pm \sqrt{4-M^2}/2)$;
(iii) when $\beta_1>0$ and $\beta_2>0$, from the second equality, $f(\tau,\delta)$ attains its minimum when  $r=0$, i.e., at $(\tau_g,\delta_g)=(0,0)$. 
From \eqref{eq:engequivalent}, the total energy $E(\tau, \delta)$ attains the minimum at the same 
$(\tau_g,\delta_g)$.
Then, from the proof of lemma \ref{lemma:equiv_tau_delta}, one can obtain all real ground states $\bm{\xi}_g\in\mathbb{R}^5$ of \eqref{ground-uniform} by direct calculations
 based on the four different parameter cases
 as shown in the proof of lemma \ref{lemma:equiv_tau_delta}.  
In addition, from  \eqref{formula_tau_delta} $\tau$ and $\delta$ are 
 respectively the value of $F_{+}$ and $A_{00}$, hence according 
 to the standard of classification in {\bf Section \ref{sec:class_phase}},
 the phases of the ground states with parameters $\beta_1, \beta_2$ classified
  in (i), (ii) and (iii) are respectively ferromagnetic, nematic and cyclic. }

\textcolor{black}{Above all,  the real ground state $\bm{\xi}_g\in\mathbb{R}^5$  in a spatially uniform system  can  be thoroughly solved out and  its phase category can also be identified. We summarize  
 the results in {\bf Lemmas \ref{lem:gs_ferr}-\ref{lem:gs_cyc}}, the proofs follow directly  the arguments stated  above.}

\medskip

\begin{lem}
\label{lem:gs_ferr} 
When $\beta_1<0$ and  $\beta_2>20\beta_1$,  
\textcolor{black}{the ground state $\bm{\xi}_g$ is in ferromagnetic phase,
 and for $\forall\,M\in[0, 2)$, it can be solved out  as: }
\be
\label{GSCase2Type1}
\bm{\xi}_g=\left (m_1^4/16,\; m_1^3m_2/8,\; \sqrt{6}m_1^2m_2^2/16,\; m_1m_2^3/8,\; m_2^4/16\right)^T,
\ee
with  $m_1=\sqrt{2+M}, \  m_2=\sqrt{2-M}$.
\end{lem}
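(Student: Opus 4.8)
The plan is to separate the claim into two independent pieces: identifying the optimal pair $(\tau_g,\delta_g)$ together with the phase, and then exhibiting the closed-form spinor \eqref{GSCase2Type1}. For the first piece I would read off the solution of the reduced quadratic problem \eqref{mpreduced} established just above the lemma: in the last expression for $f(\tau,\delta)$ the coefficient $\tfrac{\beta_2}{20}-\beta_1$ of $r^2\sin^2\theta$ is strictly positive precisely when $\beta_2>20\beta_1$, while $\beta_1<0$ makes $\beta_1 r^2$ decreasing in $r$; hence $f$ is minimized by $\sin^2\theta=0$ and $r=\sqrt{4-M^2}$, i.e. at $(\tau_g,\delta_g)=(\sqrt{4-M^2},0)$. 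Because $\delta_g=A_{00}(\bm{\xi}_g)=0$ whereas $\tau_g=|F_+(\bm{\xi}_g)|=\sqrt{4-M^2}>0$ for every $M\in[0,2)$, the classification recalled in \textbf{Section \ref{sec:class_phase}} immediately assigns $\bm{\xi}_g$ to the ferromagnetic phase.

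Producing the explicit vector \eqref{GSCase2Type1} is the substantive step. In principle one reconstructs $\bm{\xi}_g$ by substituting $\tau=\sqrt{4-M^2}$ and $\delta=0$ (so $p=\sqrt{1-\delta}=1$) into the case analysis underlying Lemma \ref{lemma:equiv_tau_delta}. The difficulty is that $(\tau_g,\delta_g)$ sits exactly on the boundary ellipse $\tau^2+4\delta^2=4-M^2$, so the discriminants in that proof all vanish and the component formulas degenerate; worse, for $M\neq0$ the relevant branch is the generic Case (iv), whose explicit solution is algebraically heavy. I therefore expect the cleanest route to be constructive: recognize a ferromagnetic state as a fully polarized spin-$2$ coherent state and set $\cos(\beta/2)=m_1/2$, $\sin(\beta/2)=m_2/2$ with $m_1=\sqrt{2+M}$, $m_2=\sqrt{2-M}$, so that $\cos^2(\beta/2)+\sin^2(\beta/2)=1$ and $\cos\beta=M/2$. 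The coherent-state amplitudes $\xi_\ell=\binom{4}{2+\ell}^{1/2}\,[\cos(\beta/2)]^{2+\ell}\,[\sin(\beta/2)]^{2-\ell}$ then reproduce \eqref{GSCase2Type1} term by term.

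It remains to certify that \eqref{GSCase2Type1} is both admissible and optimal, which collapses to four short computations once one sets $a=m_1^2=2+M$ and $b=m_2^2=2-M$, so that $a+b=4$. The mass $\sum_\ell\xi_\ell^2$ equals $\tfrac{1}{256}(m_1^2+m_2^2)^4=\tfrac{1}{256}\,4^4=1$; the magnetization $\sum_\ell\ell\,\xi_\ell^2$ factors as $\tfrac{1}{128}(m_1^4-m_2^4)(m_1^2+m_2^2)^2=\tfrac{1}{128}\cdot 8M\cdot 16=M$, so that $\bm{\xi}_g\in\mathbb{S}_R$ by \eqref{conservation_uniform}; the singlet amplitude from \eqref{formula_tau_delta} is $\delta=\tfrac{m_1^4m_2^4}{256}(2-8+6)=0$; and $\tau$ reduces to $\tfrac{\sqrt{ab}}{64}\,(a^3+b^3+12ab)=\sqrt{ab}=\sqrt{4-M^2}$ because $a^3+b^3+12ab=(a+b)^3=64$. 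Hence \eqref{GSCase2Type1} attains the minimizing pair $(\tau_g,\delta_g)$ and, via \eqref{eq:engequivalent} together with Lemmas \ref{lemma:equiv_tau_delta}--\ref{lemma:exist_real_sol}, is a genuine real ground state, finishing the proof. The only real obstacle is guessing the coherent-state ansatz; after that, the two identities $m_1^2+m_2^2=4$ and $(a+b)^3=a^3+b^3+3ab(a+b)$ make every verification telescope.
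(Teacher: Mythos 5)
Your proof is correct, and while the first half coincides with the paper, the construction step takes a genuinely different route. Identifying $(\tau_g,\delta_g)=(\sqrt{4-M^2},\,0)$ via the elliptic-polar coordinates \eqref{elliptcoor} and reading off the ferromagnetic phase from $\tau=F_+$, $\delta=A_{00}$ is exactly the argument the paper gives before the lemma, so there you agree. Where you diverge is in producing \eqref{GSCase2Type1}: the paper obtains the explicit spinor by back-substituting $\tau=\sqrt{4-M^2}$, $\delta=0$ (hence $p=1$) into the four-case analysis in the proof of Lemma \ref{lemma:equiv_tau_delta} --- the ``direct calculations based on the four different parameter cases'' it alludes to --- whereas you bypass that reconstruction entirely, correctly noting that it degenerates on the boundary ellipse (vanishing discriminants) and is heaviest precisely in the generic Case (iv). Instead you posit the fully polarized spin-2 coherent-state ansatz $\xi_\ell=\binom{4}{2+\ell}^{1/2}\cos^{2+\ell}(\beta/2)\,\sin^{2-\ell}(\beta/2)$ with $\cos(\beta/2)=m_1/2$, $\sin(\beta/2)=m_2/2$, and verify membership in $\mathbb{S}_R$ and the values $\delta=0$, $\tau=\sqrt{4-M^2}$; all four of your computations (mass, magnetization, the cancellation $2-8+6=0$, and $a^3+b^3+12ab=(a+b)^3=64$) check out, and optimality then follows from \eqref{eq:engequivalent} exactly as you say. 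Your route buys a short, fully explicit, easily checkable existence proof; what the paper's route buys in principle is a derivation rather than a verification: solving the constraint system at $(\tau_g,\delta_g)$ enumerates \emph{all} real solutions, which is what supports reading the lemma's ``can be solved out as'' as a uniqueness-type statement (up to the sign symmetry $\tau\mapsto-\tau$) and underlies the uniqueness observations in section 4.1. If you wanted that stronger conclusion from your argument, you would have to add that the vanishing discriminant at the boundary forces the solution of the case equations to be unique --- which is precisely the degenerate computation you chose to avoid.
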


\medskip

\begin{lem}
\label{lem:gs_nem} 
When $\beta_2<0$ and $\beta_2<20\beta_1$,  
\textcolor{black}{the ground  state $\bm{\xi}_g$ is in nematic phase.} Moreover, 
if $0<M<2$, then $\bm{\xi}_g$ can be solved out as
\be
\label{GSCase1Type2}
\bm{\xi}_g=\left( m_1/2,\;  0,\; 0,\; 0,\; m_2/2\right)^T;
\ee
and if $M=0$,   $\bm{\xi}_g$ can be taken in two different types as 
\be\label{GSCase1Type11}
\begin{split}
&\bm{\xi}_g=\left(\gamma_1\cos\theta, \; \gamma_1\sin\theta,\;  \gamma,\;  
-\gamma_1\sin\theta,  \; \gamma_1\cos\theta \right )^T,\quad \hbox{\rm or} \\
&\bm{\xi}_g=\left(\cos\theta/\sqrt{2},\; \sin\theta/\sqrt{2},\; 0, \; \sin\theta/\sqrt{2},\; -\cos\theta/\sqrt{2} \right)^T;
\end{split}
\ee
where $\gamma_1=\sqrt{\fl{1-\gamma}{2}}$ for {\color{black}any $\gamma$ with} $|\gamma|\leq1$ and  $\theta\in[0, 2\pi)$.
\end{lem}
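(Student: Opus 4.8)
The plan is to follow the two-stage reduction already established for the uniform system: first locate the optimal pair $(\tau_g,\delta_g)$ for the reduced quadratic problem \eqref{mpreduced}, and then invert the map $\bm{\xi}\mapsto(\tau(\bm{\xi}),\delta(\bm{\xi}))$ via the explicit case analysis carried out in the proof of Lemma \ref{lemma:equiv_tau_delta} to read off the actual components $\xi_\ell$. Since $E(\tau,\delta)$ differs from $f(\tau,\delta)$ only by the additive constant $\tfrac12(\beta_0+\beta_1 M^2)$, minimizing $E$ over $\mathbb{S}_C$ is equivalent to minimizing $f$ over the elliptic region $S_{\tau,\delta}$, and the phase label is then dictated by the position of $(\tau_g,\delta_g)$ through the classification of Section \ref{sec:class_phase}.

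For the first stage I would specialize the elliptic-polar form \eqref{elliptcoor} of $f$ to the regime $\beta_2<0$, $\beta_2<20\beta_1$. Writing $f=\tfrac{\beta_2}{20}r^2+(\beta_1-\tfrac{\beta_2}{20})r^2\cos^2\theta$ and using $\beta_1-\tfrac{\beta_2}{20}>0$, the minimum is attained at $\cos^2\theta=0$ and, since $\beta_2<0$, at the largest admissible radius $r=\sqrt{4-M^2}$; this gives $(\tau_g,\delta_g)=(0,\pm\sqrt{4-M^2}/2)$. Because $\tau_g=F_+=0$ while $|\delta_g|=|A_{00}|=\sqrt{4-M^2}/2>0$ for every $M\in[0,2)$, the classification in Section \ref{sec:class_phase} identifies the phase as nematic, which settles the first assertion.

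The substantive step is to translate these target values of $(\tau,\delta)$ back into $\bm{\xi}$. For $0<M<2$ one has $\delta_g=\sqrt{4-M^2}/2<1$, so $p>0$ and the optimum lies on the boundary $\tau^2+4\delta^2=4-M^2$ where equality holds in \eqref{sol_cond}. Feeding $\tau=0$ and $4\delta^2=4-M^2$ into the Case (iii) branch ($\sin\theta=0$) of the proof of Lemma \ref{lemma:equiv_tau_delta}, the formula $\xi_0^2=(4-M^2-4\delta^2-4\tau^2)/(8p^2)$ forces $\xi_0=0$, the formula $\xi_1=\tau\cos\theta/(2p)$ forces $\xi_1=0$ (hence $\xi_{-1}=0$ via \eqref{assume}), and the remaining expression for $\xi_2$ together with the mass/magnetization relations \eqref{conservation_uniform} yields $\xi_2^2=(2+M)/4$ and $\xi_{-2}^2=(2-M)/4$. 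A short algebraic simplification (verifying that the Case (iii) value of $\xi_2$ indeed reduces to $\sqrt{2+M}/2$) then produces \eqref{GSCase1Type2}, and uniqueness up to a global phase rotation follows from the uniqueness statement quoted from \cite{BCY}.

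The delicate case is $M=0$, where $\delta_g=\pm1$ and the inversion degenerates. For $\delta=1$ one lands exactly in Case (i) of Lemma \ref{lemma:equiv_tau_delta}: here $p=0$ forces $\xi_{-2}=\xi_2$ and $\xi_{-1}=-\xi_1$ but leaves $(\xi_2,\xi_1,\xi_0)$ free on the sphere $2\xi_2^2+2\xi_1^2+\xi_0^2=1$, and parametrizing this sphere by $(\gamma,\theta)$ gives the first family in \eqref{GSCase1Type11}. For $\delta=-1$ (which has the same energy, as $f$ depends only on $\delta^2$) a parallel computation with $p=\sqrt{2}$ produces the second family. I expect the main obstacle to be precisely this $M=0$ degeneracy: unlike the case $0<M<2$, the preimage of $(\tau_g,\delta_g)$ under $\bm{\xi}\mapsto(\tau,\delta)$ is a positive-dimensional manifold, so one must enumerate all solution families rather than exhibit a single representative, verify that the two listed parametrizations exhaust the real ground states up to phase, and reconcile the normalization constant $\gamma_1$ with the mass constraint $2\gamma_1^2+\gamma^2=1$.
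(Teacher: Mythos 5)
Your proposal is correct and follows essentially the same route as the paper: reduce to minimizing $f(\tau,\delta)=\beta_1\tau^2+\beta_2\delta^2/5$ over the elliptic region $S_{\tau,\delta}$ via the elliptic-polar coordinates \eqref{elliptcoor} to locate $(\tau_g,\delta_g)=\left(0,\pm\sqrt{4-M^2}/2\right)$, identify the phase as nematic from $F_+=0$, $|A_{00}|>0$, and then invert through the case analysis in the proof of Lemma \ref{lemma:equiv_tau_delta} (Case (iii) for $0<M<2$, the degenerate cases $\delta=\pm1$ for $M=0$) to recover the explicit families. As a side benefit, your insistence on the mass constraint $2\gamma_1^2+\gamma^2=1$ yields $\gamma_1=\sqrt{(1-\gamma^2)/2}$, which indicates that the normalization $\gamma_1=\sqrt{(1-\gamma)/2}$ printed in \eqref{GSCase1Type11} is a typo.
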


\medskip

\begin{lem}
\label{lem:gs_cyc} 
When $\beta_1>0$ and $\beta_2>0$, 
\textcolor{black}{the ground state $\bm{\xi}_g$ is in cyclic phase.}
Moreover, if $M\in [0,1]$, then $\bm{\xi}_g$ can be taken in three different types as
\be\label{GSCase3Type311}
\begin{split}
&\bm{\xi}_g=\left( m_1^2/4,\; 0, \; \sqrt{2}\,m_1m_2/4,\; 0,\;  m_2^2/4 \right)^T,\quad \hbox{\rm or} \\
&\bm{\xi}_g=\left(\sqrt{3}\,m_3 m_4/4,\; m_3^2/2,\; -\sqrt{2}\,m_3 m_4/4,\; m_4^2/2,\; \sqrt{3}\,m_3 m_4/4\right)^T,
\quad \hbox{\rm or}
\end{split}
\ee
\be
\label{GSCase3Type3}
\bm{\xi}_g=(\xi^g_2,\xi^g_1,\xi^g_0,\xi^g_{-1},\xi^g_{-2})^T,
\ee
where $m_3=\sqrt{1+M}, \  m_4=\sqrt{1-M}$ and
\be\label{GSCase3Type33}
\left\{
\begin{array}{l}
\xi^g_0=-\fl{3\sqrt{6}}{8}M\sin^2\theta\cos\theta
\mp-\fl{\sqrt{2}}{8}\big(2\cot(2\theta)+\cot\theta\big)g(\theta), 	\\[0.5em]
\xi^g_1=\fl{3}{4}M\sin^3\theta+\fl{1}{4}m_2^2\sin\theta
\pm\fl{\sqrt{3}}{4}g(\theta),  	\qquad\quad	\xi^g_{-1}=\sin\theta-\xi^g_1,\\[0.5em]
\xi^g_2=\fl{1}{8}\big(3M\sin^2\theta+2m_1^2\big)\cos\theta
\mp\fl{\sqrt{3}}{8} g(\theta)\tan\theta,  \qquad		\xi^g_{-2}=\xi^g_2-\cos\theta,
\end{array}
\right.
\ee
with $g(\theta)=\sqrt{ \big(m_1m_2-3M^2\sin^2\theta\big) \sin^2\theta\cos^2\theta}$ for 
$\theta\in(0,2\pi)$ satisfying $|\sin\theta|\neq 1$ and
$m_1m_2-3M^2\sin^2\theta\geq 0$.
 If $M\in (1,2)$,  $\bm{\xi}_g$ can only be taken as 
\eqref{GSCase3Type3}.
\end{lem}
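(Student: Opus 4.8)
The plan is to separate a phase-identification step from a solution-enumeration step, using only the machinery already assembled in Lemma~\ref{lemma:equiv_tau_delta}, Lemma~\ref{lemma:exist_real_sol}, and the elliptic-polar reduction preceding the statement. For the first assertion, when $\beta_1>0$ and $\beta_2>0$ the coordinate change \eqref{elliptcoor} writes $f(\tau,\delta)=\beta_1 r^2\cos^2\theta+\frac{\beta_2}{20}r^2\sin^2\theta\ge 0$, with equality forced only at $r=0$; hence the quadratic minimization \eqref{mpreduced} over $S_{\tau,\delta}$ has the unique minimizer $(\tau_g,\delta_g)=(0,0)$. By the energy identity \eqref{eq:engequivalent} together with Lemma~\ref{lemma:exist_real_sol}, the real ground state then satisfies $\tau(\bm{\xi}_g)=F_+(\bm{\xi}_g)=0$ and $\delta(\bm{\xi}_g)=A_{00}(\bm{\xi}_g)=0$, which is precisely the cyclic phase in the classification of Section~\ref{sec:class_phase}.

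For the explicit forms I would note that, since $f(\tau_g,\delta_g)=0$, every admissible $\bm{\xi}$ with $\tau=\delta=0$ shares the common energy $\frac{1}{2}(\beta_0+\beta_1 M^2)$, so the ground-state set is exactly the real solution manifold of \eqref{conservation_uniform} augmented by the two equations $\tau(\bm{\xi})=0$ and $\delta(\bm{\xi})=0$. I recover this manifold by revisiting the four-case decomposition in the proof of Lemma~\ref{lemma:equiv_tau_delta} specialized to $\delta=0$ (hence $p=\sqrt{1-\delta}=1$) and $\tau=0$: Case~(i) is vacuous since $\delta\ne1$, and the three remaining cases yield the three advertised types. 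Case~(iii) ($\sin\theta=0$) forces $\xi_1=\xi_{-1}=0$, and \eqref{AppA_CaseIII_1} then gives $\xi_2=(2+M)\cos\theta/4$ and $\xi_0^2=(4-M^2)/8$, which after fixing the sign of $\cos\theta$ produces the first representative in \eqref{GSCase3Type311} with $m_1=\sqrt{2+M}$, $m_2=\sqrt{2-M}$; Case~(ii) ($\cos\theta=0$) collapses \eqref{AppA_CaseII_3} to $16\xi_2^2=3(1-M^2)$ and, via \eqref{eq:xi0}, gives the second representative in \eqref{GSCase3Type311}; and Case~(iv) ($\sin(2\theta)\ne0$) delivers the one-parameter family \eqref{GSCase3Type3}--\eqref{GSCase3Type33}, where the closed form for $(\xi_0,\xi_1,\xi_2)$ and the function $g(\theta)$ come from the two roots of the quadratic \eqref{AppA_CaseIV_3} and its discriminant \eqref{Delta_AppA_CaseIV_4}, both evaluated at $\tau=\delta=0$.

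The $M$-dichotomy then follows from the reality thresholds of each case. The relation $16\xi_2^2=3(1-M^2)$ from Case~(ii) is solvable in $\mathbb{R}$ only when $M\le 1$, exactly where the parameter $m_4=\sqrt{1-M}$ in the second representative is real, so that representative exists only for $M\in[0,1]$; by contrast Case~(iii) keeps $\xi_0^2=(4-M^2)/8\ge0$ on all of $[0,2)$, while the validity window $m_1m_2-3M^2\sin^2\theta\ge0$ read off from \eqref{Delta_AppA_CaseIV_4} confines $\theta$ toward the endpoints once $M>1$. I would close the argument by checking that letting $\sin\theta\to0$ in \eqref{GSCase3Type33} reproduces the first representative in \eqref{GSCase3Type311} up to the admissible phase-rotation symmetry of the ground state, so that for $M\in(1,2)$ the family \eqref{GSCase3Type3} already captures every cyclic ground state. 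The main obstacle I anticipate is the Case~(iv) bookkeeping: verifying that the $\tau=\delta=0$ specialization of \eqref{AppA_CaseIV_3} yields exactly the stated closed form, correctly tracking the two $\pm$ sign branches together with the $\theta$-window, and confirming that these exhaust the real solution manifold without spurious or missing states --- this algebra is heavy though routine once the case reduction is pinned down.
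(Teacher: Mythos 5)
Your strategy coincides with the paper's own proof, which for this lemma is exactly the route you describe: the elliptic--polar reduction \eqref{elliptcoor} forces $(\tau_g,\delta_g)=(0,0)$ when $\beta_1>0$ and $\beta_2>0$, the classification of Section~\ref{sec:class_phase} identifies the phase as cyclic, and the explicit representatives are obtained by rerunning the four-case decomposition from the proof of Lemma~\ref{lemma:equiv_tau_delta} with $\tau=\delta=0$ (hence $p=1$). Your phase-identification step is correct, and so is your Case~(ii) computation: $16\xi_2^2=3(1-M^2)$ together with \eqref{eq:xi0} does give the second representative and the threshold $M\le 1$.

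However, your Case~(iii) step contains a genuine error, and it is one your own first paragraph should have caught. For the first representative of \eqref{GSCase3Type311} as printed, all five components are nonnegative, so
\[
\delta=2\xi_2\xi_{-2}-2\xi_1\xi_{-1}+\xi_0^2
=\frac{m_1^2m_2^2}{8}+\frac{m_1^2m_2^2}{8}=\frac{4-M^2}{4}\neq 0 \quad\text{for } M<2,
\]
which contradicts your (correct) conclusion that every ground state must satisfy $\tau=\delta=0$; indeed this state has strictly larger energy when $\beta_2>0$, so it cannot lie on the solution manifold you are enumerating. Carrying Case~(iii) out properly --- $\tau=0$ forces $\xi_1=\xi_{-1}=0$, while \eqref{assume} gives $\xi_{-2}-\xi_2=\cos\theta$ and the magnetization constraint gives $\xi_2+\xi_{-2}=-M\cos\theta/2$ --- yields $\bigl(m_1^2/4,\,0,\,\pm\sqrt{2}\,m_1m_2/4,\,0,\,-m_2^2/4\bigr)$, with \emph{opposite} signs on $\xi_{\pm2}$ (so that $2\xi_2\xi_{-2}=-\xi_0^2$ and $\delta=0$); the statement itself carries a sign typo which a blind proof has to surface rather than paper over. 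Your escape hatch, that the discrepancy is absorbed ``up to the admissible phase-rotation symmetry,'' fails concretely: the gauge factors satisfy $\alpha_{-2}=\bar{\alpha}_2$ whenever $\alpha_0=1$, so no allowed rotation flips the sign of $\xi_{-2}$ alone, and the two states have different $|A_{00}|$ and hence different energies. Finally, the two items you defer are precisely the substance of the lemma: the Case~(iv) algebra behind \eqref{GSCase3Type33}, and the claim that the $\sin\theta\to 0$ limit of \eqref{GSCase3Type33} recovers the Case~(iii) state so that $M\in(1,2)$ is covered by \eqref{GSCase3Type3} alone. The latter is delicate because $\bigl(2\cot(2\theta)+\cot\theta\bigr)g(\theta)$ is a $0\cdot\infty$ form there (its limit is finite and nonzero), and when evaluated it reproduces the sign-corrected Case~(iii) state, not the printed one --- confirming the error above rather than your asserted closure.
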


\medskip

\begin{remark}
\label{RM2.1} 
For a spin-2 BEC in the spatially uniform system, i.e. under the ansatz 
\eqref{Uniform-GS}, the minimization problem \eqref{groundnorm} collapses to
\be\label{tldbmxi}
\tilde{\bm{\xi}}_g :=\arg \min_{|\bm{\xi}|=1} \mathcal{E}_U(\bm{\xi})=
\arg \min_{M\in[-2,2]}\mathcal{E}_U(\bm{\xi}_g^M), \quad \hbox{\rm with}\quad 
\bm{\xi}_g^M:=\arg \; \min_{\bm{\xi} \in \mathbb{S}_C} \mathcal{E}_U(\bm{\xi}).
\ee
Define 
\be\label{betaM123}
\beta(M) :=\mathcal{E}_U(\bm{\xi}_g^M), \qquad M\in[-2,2].
\ee
By the results in Theorem 2.1, we have for $M\in[-2,2]$ 
\begin{equation}
\label{energy-uniform}
\beta(M):=\mathcal{E}_U(\bm{\xi}_g^M)
 =\fl{1}{2}\left\{\begin{array}{ll} 
\beta_0+4\beta_1, &   \beta_1<0 \;\;  \& \;\; \beta_2>20\beta_1,\\[0.25em]
\beta_0+\fl{\beta_2}{5}+\fl{\left(20\beta_1-\beta_2 \right)M^2}{20}, &   \beta_2<0  \;\;  \& \;\; \beta_2<20\beta_1,\\[0.25em]
\beta_0+\beta_1M^2, &  \beta_1>0  \;\;  \& \;\; \beta_2>0.
\end{array}\right.
\end{equation}
Therefore, the energy of the ground state $\tilde{\bm{\xi}}_g$ defined in
the minimization problem \eqref{tldbmxi} is given as
\begin{equation}
\label{energy-uniform-one-constrain}
\mathcal{E}_U(\tilde{\bm{\xi}}_g) =\min_{M\in[-2,2]}\beta(M)
 =\fl{1}{2}\left\{\begin{array}{ll} 
\beta_0+4\beta_1, &   \beta_1<0 \;\;  \& \;\; \beta_2>20\beta_1,\\
\beta_0+\beta_2/5,&   \beta_2<0  \;\;  \& \;\; \beta_2<20\beta_1,\\
\beta_0, &  \beta_1>0  \;\;  \& \;\; \beta_2>0.
\end{array}\right.
\end{equation}
This immediately suggests that, for the ground state of a spin-2 BEC defined as the minimizer of the energy functional under the total mass conservation, the ground state energy in a spatially uniform system is achieved in nematic and cyclic phases when $M=0$, and respectively, in the ferromagnetic phase for any $M\in[-2,2]$.
The ground state is not unique. In fact, in the ferromagnetic phase, 
the ground state $\tilde{\bm{\xi}}_g$ can be taken as 
\eqref{GSCase2Type1} for any $M\in [0,2]$, and in nematic and cyclic phases, $\tilde{\bm{\xi}}_g$ 
can be taken as \eqref{GSCase1Type11} and 
\eqref{GSCase3Type311}-\eqref{GSCase3Type3} with $M=0$, respectively.
\end{remark}

\subsection{Single mode approximation of ground states}
\label{SMA}
In the literature \cite{BCY,KU2}, the single mode approximation (SMA) is an interesting and 
useful tool for obtaining approximate ground states of spinor BEC. It can reduce 
solving the ground state of a spin-2 BEC to solving the ground state of a single component 
BEC. In fact, in the SMA for a spin-2 BEC, one assumes an ansatz for $\Phi\in {\mathbb S}$ 
in \eqref{ground} as
\be
\label{SMA_GS}
\Phi(\bx)=\phi(\bx)(\xi_2,\;\xi_1,\;\xi_0,\;\xi_{-1},\;\xi_{-2})^T =:\phi(\bx)\,\bm{\xi}=:
\Phi_{\rm sma}(\bx),
\ee
where $\bm{\xi}=(\xi_2,\;\xi_1,\;\xi_0,\;\xi_{-1},\;\xi_{-2})^T\in\mathbb{S}_C$   
and $\phi:=\phi(\bx)\in  \widetilde{ \mathbb{S}}_1 :=\{\varphi | \int_{\mathbb{R}^d}|\varphi(\bx)|^2 d\bx=1\}.$  
 Plugging \eqref{SMA_GS} into \eqref{energy}, noticing \eqref{energy_uniform},
we obtain 
\be
\label{energy_sma}
\mathcal{E}(\Phi)=\mathcal{E}(\Phi_{\rm sma})
 =\int_{\mathbb{R}^d} \left[\frac{1}{2}|\nabla\phi|^2+V|\phi|^2 
+\mathcal{E}_U(\bm{\xi})|\phi|^4 \right]d\bx =:\mathcal{E}_{\rm sma}(\phi,\bm{\xi}).
\ee
Then the SMA of  ground state is to find $\Phi^g_{\rm sma}=\phi_g\,\bm{\xi}_g$
with $\bm{\xi}_g\in\mathbb{S}_C$ and $\phi_g\in \widetilde{ \mathbb{S}}_1$ s.t. 
\be\label{Phismag}
\Phi^g_{\rm sma}:=\arg\;\min_{\Phi_{\rm sma}\in\mathbb{S}}\mathcal{E}(\Phi_{\rm sma}).
\ee
Combining \eqref{Phismag}, \eqref{energy_sma}, \eqref{tldbmxi} and \eqref{betaM123}, we get
\bes
\begin{split}
\mathcal{E}(\Phi^g_{\rm sma})=&\min_{\Phi_{\rm sma}\in\mathbb{S}}\mathcal{E}(\Phi_{\rm sma})=
\min_{\phi\in\widetilde{ \mathbb{S}}_1}\min_{\bm{\xi}\in\mathbb{S}_C} \mathcal{E}_{\rm sma}(\phi,\bm{\xi})\\
=&\min_{\phi\in \widetilde{ \mathbb{S}}_1}\left\{\int_{\mathbb{R}^d} \left[\frac{1}{2}|\nabla\phi|^2
+V|\phi|^2+\left(\min_{\bm{\xi}\in\mathbb{S}_C}\mathcal{E}_U(\bm{\xi})\right)\, |\phi|^4 \right]d\bx
\right\}
=\min_{\phi\in\widetilde{ \mathbb{S}}_1}\;E_{\rm sma}(\phi),
\end{split}
\ees
where
\be
E_{\rm sma}(\phi)=
\int_{\mathbb{R}^d} \left[\frac{1}{2}|\nabla\phi|^2
+V|\phi|^2+\beta(M)\, |\phi|^4 \right]d\bx.
\ee
Thus $\bm{\xi}_g$ is the ground state of a 
spin-2 BEC in a spatially uniform system, i.e. \eqref{ground-uniform}, and 
$\phi_g$ is the ground state of a singe-component BEC, i.e.
\be\label{phig1c}
\phi_g:=\arg\; \min_{\phi\in\widetilde{ \mathbb{S}}_1}\;E_{\rm sma}(\phi).
\ee

As it has been observed numerically and proved mathematically 
in the literature \cite{BCY,KU2},  the above single mode approximation of  ground state  indeed  gives the ground state of the spin-2 BEC in the following cases: (i) when $M=\pm2$, (ii) when $M=0$, and (iii) in the ferromagnetic phase when $\beta_1<0$ and $\beta_2>20\beta_1$ for $M\in[-2,2]$. Thus, in these cases, the computation of ground state of a spin-2 BEC can be reduced to the computation of the ground state of a single component BEC, i.e. \eqref{phig1c}.  Certainly, for all the other cases, one has to solve the original minimization problem  \eqref{ground}.

\section{An efficient and accurate numerical method}
In this section, we first present a normalized gradient flow (NGF) to compute the 
ground state of the spin-2 BEC, then introduce three additional 
equations for determining the five projection constants in the projection step
for semi-discretization of the NGF in time, and finally a 
full discretization of the NGF is proposed.

\subsection{A continuous normalized gradient flow (CNGF)}
In order to compute the ground state of spin-2 BEC \eqref{ground}, similar as 
for the single-component BEC \cite{BCY0,BD} and spin-1 BEC \cite{BCY0,BL}, here we first 
present a continuous normalized gradient flow (CNGF) for $\Phi:=\Phi(\bx,t)=(\phi_2,\phi_1,\phi_0,\phi_{-1},\phi_{-2})^T:=
(\phi_2(\bx,t),\phi_1(\bx,t),\phi_0(\bx,t)$, $\phi_{-1}(\bx,t),\phi_{-2}(\bx,t))^T $ as
\cite{W2}:   
\be 
\label{eq: CNGF1}
\qquad\p_t\phi_\ell=-[\textcolor{black}{H_\rho}+a_\ell(\Phi)]\phi_\ell-f_\ell(\Phi)
+[\mu_\Phi(t)+\ell\lambda_\Phi(t)]\phi_\ell =: \big(\BH\Phi\big)_{\ell}, \ -2\le \ell\le 2,
\ee
where $a_\ell:=a_\ell(\Phi)$ and $f_\ell:=f_\ell(\Phi)$ ($\ell=2,\cdots,-2$) are given as
\[
\begin{split}
a_0=&3\beta_1(|\phi_1|^2+|\phi_{-1}|^2)+0.2\beta_2|\phi_{0}|^2,\\[0.5em]
 f_0=&\beta_1\bigg[\bigg(\fl{\sqrt{6}}{2}F_{+}-3\bar{\phi}_1\phi_0\bigg)\phi_1+
\bigg(\fl{\sqrt{6}}{2}F_{-}- 3\bar{\phi}_{-1}\phi_0\bigg)\phi_{-1}\bigg]
+\beta_2\bigg(\fl{A_{00}}{\sqrt{5}}+0.2\beta_2\phi_0^2\bigg)\bar{\phi}_0,\\[0.5em]
 a_{\pm\ell} =&\beta_1\big[2\,|\phi_{\pm(3-\ell)}|^2\pm\ell\,F_z(\Phi)+(6-3\ell)\,|\phi_0|^2\big]+0.4\beta_2|\phi_{\mp\ell}|^2,\qquad \ell=2,1\\[0.5em]
 f_{\pm\ell}=&\beta_1\bigg\{ \Big[(\ell-1)F_\mp+(2-\ell)F_\pm -2\,\bar{\phi}_{\pm(3-\ell)}\phi_{\pm\ell}\Big]\phi_{\pm(3-\ell)}\\[0.05em]
 &+(2-\ell)\,\bigg[\fl{\sqrt{6}}{2}F_\mp-3\bar{\phi}_0\phi_{\pm\ell}\Big]\phi_0\bigg\}
 +\beta_2\bigg[\fl{(-1)^\ell \,A_{00}}{\sqrt{5}}-0.4\phi_{\pm\ell}\phi_{\mp\ell}\bigg]\bar{\phi}_{\mp\ell},\quad \ell=2,1.
 \end{split}
\]
Here $\mu_\Phi(t)$ and $\lambda_\Phi(t)$  are the Lagrangian multipliers such that both the mass 
\eqref{mass} and magnetization \eqref{magnet} are conserved during  dynamics and they can be taken as \cite{W2}: 
\begin{equation}\label{h213}
\left\{
\begin{split}
&\mu_\Phi(t)=\frac{\mathcal{R}(\Phi(\bx,t)) \,\mathcal{K}(\Phi(\bx,t))-\mathcal{M}(\Phi(\bx,t))\, 
\mathcal{P}(\Phi(\bx,t))}
{\mathcal{R}(\Phi(\bx,t))\, \mathcal{N}(\Phi(\bx,t))- \mathcal{M}^2(\Phi(\bx,t))},\\[0.5em]
&\lambda_\Phi(t)=\frac{\mathcal{N}(\Phi(\bx,t))\, \mathcal{P}(\Phi(\bx,t))-\mathcal{M}(\Phi(\bx,t))\, \mathcal{K}(\Phi(\bx,t))}
{\mathcal{R}(\Phi(\bx,t))\, \mathcal{N}(\Phi(\bx,t))-\mathcal{M}^2(\Phi(\bx,t))},
\end{split}
\right.
\end{equation}
with $\mathcal{N}(\Phi(\bx,t))$ and $\mathcal{M}(\Phi(\bx,t))$ given in  \eqref{mass}  and \eqref{magnet}, respectively, and 
\[
\begin{split}
&\mathcal{R}(\Phi(\bx,t))=\sum_{\ell=-2}^2 \ell^2\|\phi_{\ell}(\bx,t)\|^2,\quad 
\mathcal{K}(\Phi(\bx,t))=\sum_{\ell=-2}^2\int_{\mathbb{R}^d}\bar{\phi}_\ell(\bx,t)\,
\big(\BH\Phi\big)_{-\ell}(\bx,t)\,d\bx,\\
&\mathcal{P}(\Phi(\bx,t))=\sum_{\ell=-2}^2\int_{\mathbb{R}^d}\,\ell\,\bar{\phi}_\ell(\bx,t)
\,\big(\BH\Phi\big)_{-\ell}(\bx,t)\,d\bx, \qquad t\ge0.
\end{split}
\]
For any given initial data $\Phi(\bx,0):=\Phi_0(\bx)$
satisfying 
\be \label{conserve-initial}
\mathcal{N}(\Phi_0(\bx))=1, \qquad\qquad
\mathcal{M}(\Phi_0(\bx))=M,
\ee
it is easy to show that the CNGF \eqref{eq: CNGF1} conserves the total 
mass and magnetization meanwhile  diminishes the total energy \cite{W2}, i.e., 
\be
\begin{split}
&\mathcal{N}(\Phi(\cdot,t)) \equiv \mathcal{N}(\Phi_0)=1, 
\qquad \mathcal{M}(\Phi(\cdot,t)) \equiv \mathcal{M}(\Phi_0)=M, \qquad t\ge0, \\
&\mathcal{E}(\Phi(\cdot,t))\leq \mathcal{E}(\Phi(\cdot,s))\leq \mathcal{E}(\Phi_0), \qquad {\rm for}\quad \forall\; t\ge s\ge 0.
\end{split}
\ee
Thus the ground state of spin-2 BEC \eqref{ground} can be obtained 
as the steady state of the CNGF \eqref{eq: CNGF1} with proper choice of the initial data $\Phi_0$ satisfying  
\eqref{conserve-initial}.

\subsection{A gradient flow with discrete normalization (GFDN)}
\label{sec:algorithm}
Choose a time step size {\color{black}$\Delta t>0$} and denote time steps as  {\color{black}$t_n=n\Delta t$} for $n\ge0$.  
Then a gradient flow with discrete normalization (GFDN) for 
computing the ground state of the spin-2 BEC \eqref{ground} 
can be constructed by first  applying the first-order time-splitting 
semi-discretization of the CNGF \eqref{eq: CNGF1} as
\be
\label{eq:GF}
 \partial_t\phi_\ell
=-\left(\textcolor{black}{H_\rho}+a_\ell(\Phi)\right)\phi_{\ell}-f_\ell(\Phi),\quad  t\in[t_{n-1},t_n), \quad
\ell=2,1,\cdots,-2,
\ee
followed by a projection step as
\be
\label{proj}
\phi_\ell(\bx,t_n):=\phi_\ell(\bx,t_n^+)=\sigma_\ell^n\phi_\ell(\bx,t_n^-),\qquad \ell=2,1,\cdots,-2.
\ee
Here  $\phi_\ell(\bx,t_n^{\pm})=\lim_{t\rightarrow t_n^{\pm}}\phi_\ell(\bx,t)$. Moreover, the projection constants $\sigma_\ell^n\ge0$ for $\ell=2,1,\cdots,-2$ are  to be
 chosen such that 
\begin{equation}
\label{h37}
 \|\Phi(\cdot,t_n)\|^2=\sum_{\ell=-2}^2\|\phi_\ell(\cdot,t_n)\|^2=1,\quad\quad
 \sum_{\ell=-2}^2 \ell \|\phi_\ell (\cdot,t_n)\|^2=M.
 \end{equation}
Plugging \eqref{proj} into \eqref{h37}, we have
\begin{equation}
\label{h310}
 \sum_{\ell=-2}^2(\sigma_\ell^n)^2\|\phi_\ell(\cdot,t_n^-)\|^2=1,
 \qquad\qquad \sum_{\ell=-2}^2 \ell (\sigma_\ell^n)^2\|\phi_\ell(\cdot,t_n^-)\|^2=M.
\end{equation}

In fact, in the projection step, we have to determine the five projection constants $\sigma_\ell^n$
for $\ell=2,\cdots, -2$ in \eqref{proj}.  However, we only have two equations in \eqref{h310}.
In order to find additional proper constraints for determining the five projection  constants
in  projection step \eqref{proj}, we can view the GFDN \eqref{eq:GF}-\eqref{proj} as 
a first-order time-splitting semi-discretization of the CNGF \eqref{eq: CNGF1}. 
In this regard, the projection step \eqref{proj} is  {\color{black} similar} to
solving the following nonlinear ordinary differential equations (ODEs):
\be
 \p_t\phi_\ell(\bx,t)=[\mu_\Phi(t)+\ell\lambda_\Phi(t)]\phi_\ell,\quad 
 t_{n-1}\leq t \leq t_{n}, \quad \ell=2,1,\cdots,-2.
\ee
Solving the above ODEs,  one obtains for $\ell=2,1,\cdots,-2$ as
\be
\phi_\ell(\bx,t_n)= \phi_\ell(\bx,t_{n-1})\exp\left( \int_{t_{n-1}}^{t_n} 
[\mu_\Phi(s)+\ell\lambda_\Phi(s)]\ ds \right ):=\tilde\sigma_\ell^n\phi_\ell(\bx,t_{n-1}), 
\ee
which  suggests the following three relationships for the constants $\tilde\sigma_\ell^n\;
 (\ell=2,1,\cdots,-2$): 
 \be
\tilde\sigma_2^n\tilde\sigma_{-2}^n=(\tilde \sigma_0^n)^2,\qquad \tilde\sigma_1^n\tilde\sigma_{-1}^n=(\tilde\sigma_0^n)^2,\qquad
\tilde\sigma_2^n\tilde\sigma_{0}^n=(\tilde\sigma_1^n)^2.
\ee
Based on the above observation, we propose and adapt
the following three additional constrains for determining the five 
projection constants in the project step \eqref{proj} as
\begin{equation}
\label{eq: Additional_Cond}
 \sigma_2^n\sigma_{-2}^n=(\sigma_0^n)^2,\qquad\quad  \sigma_1^n\sigma_{-1}^n=(\sigma_0^n)^2,\qquad\quad  \sigma_2^n\sigma_{0}^n=(\sigma_1^n)^2.
 \end{equation}
 
For the existence and uniqueness (in most cases) of the five project constants $\sigma_\ell^n\ge0$ for $\ell=2,1,\cdots,-2$
governed by \eqref{h310} and \eqref{eq: Additional_Cond}, we have the following result.

 \medskip
 
\begin{theor}
\label{TH3.1}
For sufficiently small time step size {\color{black}$\Delta t>0$}
and for $\forall$  $M\in [0,2)$, the solution $\sigma_\ell^n$ ($\ell=2,\cdots,-2$)
satisfying \eqref{h310} and \eqref{eq: Additional_Cond} can be solved out as:

\smallskip

\begin{itemize}
\item[(i).] when $M=1$, $\|\phi_{1}(\bx,t_n^{-})\|>0$ and $\|\phi_{\ell}(\bx,t_n^{-})\|=0 \ (\ell=2,0,-1,-2)$, then
\be 
\label{constant_P_1}
\sigma_{1}^n=\frac{1}{\|\phi_{1}(\bx,t_n^{-})\|}, 
\qquad  \sigma_0^n=1, \qquad  \sigma_\ell^n=(\sigma_{1}^n)^\ell, \quad \ell=2,-1,-2;
\ee

\item[(ii).] when $M=0$, $\|\phi_{0}(\bx,t_n^{-})\|>0$ and $\|\phi_{\ell}(\bx,t_n^{-})\|=0 \  (\ell=2,1,-1,-2)$, then
\be 
\label{constant_P_2}
\sigma_{0}^n=\frac{1}{\|\phi_{0}(\bx,t_n^{-})\|},  \qquad \sigma_1^n=1, \qquad 
\sigma_\ell^n=(\sigma_0^n)^{1-\ell}, \quad \ell=2,-1,-2; 
\ee

\item[(iii).] for all other cases, then 
\be 
\label{constant_main}
\sigma_0^n=\fl{1}{\sqrt{\sum_{\ell=-2}^2\lambda_*^{\ell}\|\phi_{\ell}(\bx,t_n^{-})\|^2}},\qquad
\sigma_\ell^n=\sigma_0^n (\lambda_*)^{\ell/2}, 
 \ee
 for $\ell=-2, -1, 1, 2,$. Here, $\lambda_*$ is the unique positive solution of the following fourth-order
algebraic equation with respect to the unknown $\lambda$ as 
\be
\sum_{\ell=-2}^2(\ell-M)\|\phi_{\ell}(\bx,t_n^{-})\|^2 \lambda^{\ell+2}=0.
\ee
\end{itemize}
\end{theor}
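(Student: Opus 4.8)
The plan is to exploit the special multiplicative structure of the three additional conditions in \eqref{eq: Additional_Cond}, which collapse the five unknowns to a single geometric scaling parameter. Write $a_\ell:=\|\phi_\ell(\bx,t_n^-)\|^2\ge0$ and drop the superscript $n$ on $\sigma_\ell$ for brevity. First I would observe that, on the branch where $\sigma_0>0$, the first two relations in \eqref{eq: Additional_Cond} force $\sigma_{\pm1},\sigma_{\pm2}>0$ as well, so that setting $x_\ell=\ln\sigma_\ell$ turns \eqref{eq: Additional_Cond} into the linear relations $x_2+x_{-2}=2x_0$, $x_1+x_{-1}=2x_0$ and $x_2+x_0=2x_1$. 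Solving this linear system shows that $(x_\ell)_{\ell=-2}^2$ is an arithmetic progression, i.e. $x_\ell=x_0+\ell\,d$ for a common difference $d$. Setting $\lambda:=e^{2d}>0$, this is exactly
\be
\sigma_\ell=\sigma_0\,\lambda^{\ell/2},\qquad \ell=-2,-1,0,1,2,
\ee
so the three additional conditions are equivalent to requiring the $\sigma_\ell$ to form a geometric sequence with ratio $\sqrt\lambda$.

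Next I would substitute this geometric form into the two physical constraints \eqref{h310}. Since $\sigma_\ell^2=\sigma_0^2\lambda^\ell$, the mass constraint reads $\sigma_0^2\sum_\ell\lambda^\ell a_\ell=1$, which determines
\be
\sigma_0=\Big(\sum_{\ell=-2}^2\lambda^\ell a_\ell\Big)^{-1/2},
\ee
recovering the first formula in \eqref{constant_main}. Dividing the magnetization constraint by the mass constraint eliminates $\sigma_0$ and leaves $\sum_\ell\ell\,\lambda^\ell a_\ell=M\sum_\ell\lambda^\ell a_\ell$, i.e. $\sum_\ell(\ell-M)\lambda^\ell a_\ell=0$; multiplying through by $\lambda^2$ gives precisely the quartic
\be
\sum_{\ell=-2}^2(\ell-M)\,a_\ell\,\lambda^{\ell+2}=0
\ee
in the statement. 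Thus the whole problem reduces to locating a positive root $\lambda_*$, after which every $\sigma_\ell$ is determined.

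For existence and uniqueness of $\lambda_*$ I would argue as follows. The coefficient of $\lambda^{\ell+2}$ is $(\ell-M)a_\ell$; since $M\in[0,2)$ it is $\le0$ for $\ell<M$ and $\ge0$ for $\ell>M$, and vanishes when $\ell=M$. Reading the coefficients in order of increasing power therefore exhibits at most one sign change, so by Descartes' rule of signs the quartic has at most one positive root, and exactly one precisely when a genuine sign change occurs, i.e. when there are indices $\ell<M$ and $\ell'>M$ with $a_\ell,a_{\ell'}>0$. Equivalently, writing $g(\lambda)=(\sum_\ell\ell\,\lambda^\ell a_\ell)/(\sum_\ell\lambda^\ell a_\ell)$ for the weighted mean of $\ell$, a short computation gives $g'(\lambda)=\lambda^{-1}\mathrm{Var}(\ell)\ge0$, so $g$ increases strictly from $\ell_{\min}$ to $\ell_{\max}$ (the smallest and largest indices with $a_\ell>0$), and $g(\lambda)=M$ has a unique solution iff $\ell_{\min}<M<\ell_{\max}$. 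Because $\lambda_*>0$ and $\sigma_0>0$, all $\sigma_\ell=\sigma_0\lambda_*^{\ell/2}$ are automatically nonnegative, as required.

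It remains to verify that this straddle condition holds in case (iii) for $\Delta t$ small, and to treat the two degenerate cases. For the former I would use that $\Phi(\bx,t_{n-1})$ satisfies $\sum_\ell\ell\,\|\phi_\ell(\bx,t_{n-1})\|^2=M$ with total mass $1$: if $M$ is not an integer, the weighted average being $M$ forces positive mass on indices both below and above $M$, and if $M\in\{0,1\}$ the only non-straddling possibility is that all mass sits on the single index $\ell=M$, which is exactly cases (ii) and (i). Since the right-hand side of the gradient-flow step \eqref{eq:GF} is smooth, $\phi_\ell(\bx,t_n^-)\to\phi_\ell(\bx,t_{n-1})$ in $L^2$ as $\Delta t\to0$, so for $\Delta t$ small every index carrying positive mass at $t_{n-1}$ still carries positive mass at $t_n^-$; hence the straddle $\ell_{\min}<M<\ell_{\max}$ persists and case (iii) admits the unique root $\lambda_*$ above. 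In cases (i) and (ii) only one $a_\ell$ is nonzero, so the mass constraint alone fixes that single $\sigma_\ell$ ($\sigma_1=1/\|\phi_1\|$, resp. $\sigma_0=1/\|\phi_0\|$), the magnetization constraint is automatic, and the remaining $\sigma$'s are pinned to the geometric form by \eqref{eq: Additional_Cond} after the canonical normalization $\sigma_0=1$ (resp. $\sigma_1=1$), yielding \eqref{constant_P_1}--\eqref{constant_P_2}; here the solution is unique only up to this normalization, which is the source of the ``in most cases'' caveat. The main obstacle I anticipate is precisely this last step: making the continuity/perturbation argument quantitative enough to guarantee the straddle (hence solvability) for an explicit smallness threshold on $\Delta t$, and organizing the case distinction so that (i)--(iii) are genuinely exhaustive and mutually exclusive.
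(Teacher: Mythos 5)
Your proposal is correct, and its overall skeleton coincides with the paper's: collapse the three multiplicative conditions \eqref{eq: Additional_Cond} to the one-parameter geometric form $\sigma_\ell^n=\sigma_0^n\lambda^{\ell/2}$, substitute into \eqref{h310} to obtain $\sigma_0^n$ and the quartic $\sum_{\ell=-2}^2(\ell-M)\|\phi_\ell(\cdot,t_n^-)\|^2\lambda^{\ell+2}=0$, and derive the straddle condition (positive mass on indices both below and above $M$) from the constraints at $t_{n-1}$ together with continuity in $\Delta t$ --- this is exactly the paper's chain \eqref{Q_eq0}--\eqref{Q_eq2}, and your integer/non-integer-$M$ dichotomy mirrors its sub-cases (a)--(c). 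Where you genuinely diverge is the core existence/uniqueness step for the positive root $\lambda_*$. The paper factors $g(\lambda)=h(\lambda)\lambda^{\ell_1^n+2}$, gets existence from the intermediate value theorem ($h(0^+)<0$, $h(+\infty)=+\infty$), and proves uniqueness through a somewhat opaque algebraic computation showing $h'(\lambda_*)>0$ at \emph{every} positive root. You instead observe that the coefficients $(\ell-M)a_\ell$, read in order of increasing power, show at most one sign change (since $\ell\mapsto\ell-M$ changes sign once on $[0,2)$), so Descartes' rule gives at most one positive root, and exactly one precisely under the straddle; your variance identity $g'(\lambda)=\lambda^{-1}\mathrm{Var}(\ell)\ge 0$ gives an independent, conceptually illuminating proof that the reweighted magnetization is monotone in $\lambda$, so the root is the unique $\lambda$ restoring magnetization $M$. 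Both arguments are valid; yours is shorter and makes the structural reason for uniqueness transparent, while the paper's is bare-hands calculus. Your logarithmic reduction also makes explicit something the paper leaves implicit when it sets $\lambda=\sigma_2^n/\sigma_0^n$ in \eqref{Q_cofficient}: on the branch $\sigma_0^n>0$, conditions \eqref{eq: Additional_Cond} are \emph{exactly} equivalent to the geometric form, so no positive solutions are overlooked. Finally, the residual gap you flag --- that continuity only shows positive masses persist from $t_{n-1}$ to $t_n^-$, while identifying the concentrated situation at $t_{n-1}$ with cases (i)/(ii) (which are stated at $t_n^-$) requires that components vanishing at $t_{n-1}$ remain zero under \eqref{eq:GF} --- is present in the paper as well, whose equivalence \eqref{eq: Apend_E} is asserted from continuity alone; so on this point you are at parity with, not behind, the published argument.
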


\smallskip

\begin{proof}
Combining \eqref{h310} and \eqref{eq: Additional_Cond}, it is straightforward to  
check that \eqref{constant_P_1} is a solution (not unique) of \eqref{h310} and \eqref{eq: Additional_Cond}
in case (i), and \eqref{constant_P_2} is a solution (not unique) of \eqref{h310} 
and \eqref{eq: Additional_Cond} in case (ii).

Now we prove \eqref{constant_main} in case (iii). Noticing \eqref{h37} is also valid when $t_n$ is replaced by $t_{n-1}$ 
for $n\ge1$, i.e.
\be
\label{eq:appd_equaL}
\sum_{\ell=-2}^2 \|\phi_\ell(\bx,t_{n-1})\|^2=1, \qquad  
 \qquad \sum_{\ell=-2}^2 \ell \|\phi_\ell(\bx,t_{n-1})\|^2=M,
\ee
we have
\be
\label{Q_eq0}
\sum_{\ell=-2}^2(\ell-M) \|\phi_\ell(\bx,t_{n-1})\|^2=0.
\ee
In addition, for sufficiently small time step size {$\color{black}\Delta t>0$}, $\|\phi_\ell(\cdot,t)\|
\in\mathcal{C}([t_{n-1},t_n))$ $ (\ell=2\cdots,-2)$ implies   
\be
\label{eq: Apend_E}
\|\phi_\ell(\bx,t_{n-1})\|>0  \qquad \Longleftrightarrow \qquad \|\phi_\ell(\bx,t_n^{-})\|>0.
\ee
Combining \eqref{eq: Apend_E} and \eqref{eq:appd_equaL}, we get $\sum_{\ell=-2}^2 \|\phi_\ell(\bx,t_n^{-})\|^2 >0$. Therefore, case $(iii)$ can be divided into three sub-cases: 

\begin{itemize}
\item[(a)] $M=1$ and $\|\phi_2(\bx,t_n^{-})\|+\|\phi_0(\bx,t_n^{-})\|+\|\phi_{-1}(\bx,t_n^{-})\|+\|\phi_{-2}(\bx,t_n^{-})\|>0$.

\item[(b)] $M=0$ and $\|\phi_2(\bx,t_n^{-})\|+\|\phi_1(\bx,t_n^{-})\|+\|\phi_{-1}(\bx,t_n^{-})\|+\|\phi_{-2}(\bx,t_n^{-})\|>0$. 

\item[(c)] $M\neq1$ and $M\neq 0$, thus $\ell-M\neq 0 \ (\ell=2,\cdots,-2)$  by noticing $M\in[0, 2)$.
\end{itemize}
For all the above three sub-cases (a)-(c), noting \eqref{Q_eq0}-\eqref{eq: Apend_E}, 
 it holds  
 \be
\label{Q_eq2}
  \sum_{\ell<M} \|\phi_\ell(\bx,t_n^{-})\|^2 >0, \qquad  \qquad 
  \sum_{\ell>M} \|\phi_\ell(\bx,t_n^{-}) \|^2>0.
\ee
Denote  $\lambda=\sigma_2^n/\sigma_0^n$,  by \eqref{h310} and \eqref{eq: Additional_Cond},
for any $M\in[0, 2)$,  we have
\be\label{Q_cofficient}
\left\{
\begin{split}
&\sigma_0^n=\frac{1}{\sqrt{\sum_{\ell=-2}^2 \|\phi_\ell(\bx,t_n^-)\|^2 \lambda^\ell}},
\qquad \sigma_j^n=\sigma_0^n \lambda^{j/2},\quad j=2,1,-1,-2, \\
&g(\lambda):=\sum_{\ell=-2}^2(\ell-M)\|\phi_\ell(\bx,t_n^-)\|^2 \lambda^{\ell+2} =0.
\end{split}
\right.
\ee
Then we need only to show that $g(\lambda)$ has a unique positive root to finish the proof.

Define
$\ell_1^n$, $\ell_2^n\in \{2,1,0,-1,-2\}$ as
\[
\ell_1^n:=\min \{ \ell \;  |   \;   \|\phi_\ell(\bx,t_n^{-})\| \neq 0 \}<
\ell_2^n:=\max \{ \ell \; |  \; \|\phi_\ell(\bx,t_n^{-})\| \neq 0 \}.
\]
Then  \eqref{Q_eq2} indicates  that   $\ell_1^n<M$ and $\ell_2^n>M$. 
Hence $g(\lambda)$  can be reformulated  as 
 \[\begin{split}
g(\lambda)&=\sum_{\ell_1^n\leq \ell<M}(\ell-M)\|\phi_\ell(\bx,t_n^-)\|^2 \lambda^{\ell+2}
+\sum_{M<\ell \leq \ell_2^n}(\ell-M)\|\phi_\ell(\bx,t_n^-)\|^2  \lambda^{\ell+2}\\
&=:h(\lambda) \lambda^{\ell_1^n+2},
\end{split}
\]
where 
\[\begin{split}
h(\lambda)&=\sum_{\ell_1^n\leq \ell<M}(\ell-M)\|\phi_\ell(\bx,t_n^-)\|^2 \lambda^{\ell-\ell_1^n} 
 +\sum_{M<\ell \leq \ell_2^n}(\ell-M)\|\phi_\ell(\bx,t_n^-)\|^2  \lambda^{\ell-\ell_1^n}\\
 &=:h_1(\lambda)+h_2(\lambda).
\end{split}
\]
A simple calculation shows 
\bes
\lim_{\lambda \rightarrow 0^+} h(\lambda)=(\ell_1^n-M)\|\phi_{\ell_1^n}(\bx,t_n^-)\|^2<0, \qquad  \qquad\lim_{\lambda \rightarrow +\infty} h(\lambda)=+\infty, 
\ees
which immediately  implies that $h(\lambda)$  has  at least one positive root $\lambda_*>0$. 
In addition, at any positive root $\lambda=\lambda_*>0$ of $h(\lambda)$, noticing \eqref{Q_eq2}, we have 
\[
\begin{split}
h'(\lambda_*)=&h_1'(\lambda_*)+h_2'(\lambda_*)\\
=&\sum_{\ell_1^n\leq \ell<M} \left(\ell-\ell_1^n \right)(\ell-M)\|\phi_\ell(\bx,t_n^-)\|^2 (\lambda_*)^{\ell-\ell_1^n-1}+h_2'(\lambda_*)\\
=&\lambda_*^{-1} \;(M-\ell_1^n) \sum_{\ell_1^n\leq \ell<M}(\ell-M)\|\phi_\ell(\bx,t_n^-)\|^2 
(\lambda_*)^{\ell-\ell_1^n}  +h_2'(\lambda_*) \\
=&\lambda_*^{-1} \;(M-\ell_1^n) \; h_1(\lambda_*)+h_2'(\lambda_*)   \\
=&\lambda_*^{-1} \;(M-\ell_1^n) \; \big[h(\lambda_n)-h_2(\lambda_*)\big]+h_2'(\lambda_*)  
=\lambda_*^{-1} \;(\ell_1^n-M)\;h_2(\lambda_*) +h_2'(\lambda_*) \\
=&\sum_{M<\ell \leq \ell_2^n} \left(\ell_1^n-M\right)  (\ell-M)\|\phi_\ell(\bx,t_n^-)\|^2 
(\lambda_*)^{\ell-\ell_1^n-1}  +h_2'(\lambda_*) \\
=&\sum_{M<\ell \leq \ell_2^n} \big[ (\ell-M)^2 -\left(\ell-\ell_1^n \right)(\ell-M) \big]\;
\|\phi_\ell(\bx,t_n^-)\|^2 (\lambda_*)^{\ell-\ell_1^n-1} +h_2'(\lambda_*)\\
=&\sum_{M<\ell \leq \ell_2^n} (\ell-M)^2\|\phi_\ell(\bx,t_n^-)\|^2 (\lambda_*)^{\ell-\ell_1^n-1}>0.
\end{split}
\]
Therefore $h(\lambda)$  (and thus $g(\lambda)$)   has exactly one positive root $\lambda_*$. 
Substituting  $\lambda=\lambda_*$ into \eqref{Q_cofficient} leads to the formulas for 
the projection constants in \eqref{constant_main}.    \hfill 
\end{proof}

\subsection{A backward-forward Euler finite difference discretization}

Due to the trapping potential $V(\bx)$,  the solution $\Phi(\bx,t)$ of 
the CNGF \eqref{eq: CNGF1} (or the GFDN \eqref{eq:GF}-\eqref{proj})
decays exponentially fast to zero as $| \bx | \rightarrow \infty$. 
Hence, one can  truncate the problem into a bounded domain $\mathcal{D}$ with the homogeneous Dirichelet   boundary condition in practical computation. 
Various methods such as the backward (-forward) Euler finite difference/sine-spectral  method \cite{BCL,BD,BL}    can be applied to discretize the GFDN \eqref{eq:GF}-\eqref{proj}.  In this section, we adapt a 
backward-forward Euler finite difference method (BEFD) to discretize the GFDN \eqref{eq:GF}-\eqref{proj}.

To simplify the presentation, we introduce the scheme for the case of one
spatial dimension, i.e. $d=1$, defined on an interval $\mathcal{D}=\left(a,b \right)$ with
the homogeneous Dirichelet  boundary condition. Generalization to higher dimension is straightforward by tensor product. 
For $d=1$,  the spatial mesh size is chosen as  $h=\left(b-a\right)/N$  with $N$ an even positive integer.  Let 
$
x_j:=a+j h, \quad j=0,\cdots,N
$ 
be the grid points, denote respectively $\Phi_j^n$ and \textcolor{black}{$\rho_j^n$} 
as the approximation of $\Phi(x_j,t_n)$ and \textcolor{black}{$\rho(x_j,t_n)$}.  Moreover, we denote $\Phi^n$ as the solution vector with component $\Phi_j^n$.  Then the GFDN \eqref{eq:GF}-\eqref{proj} is 
discretized  as 
\be\label{eq:scheme-GF}
\left\{
\begin{split}
&\frac{\phi^\ast_{\ell,j}-\phi_{\ell,j}^{n}}{{\color{black}\Delta t}}=\left(\fl{1}{2}\delta_h^2-V_j-\textcolor{black}{\rho_j^n}-a_\ell(\Phi_j^{n}) \right)\phi_{\ell,j}^\ast-f_\ell(\Phi_j^{n}), \quad  1\le j\le N-1,\\
&\phi_{\ell,j}^{n+1}=\sigma_\ell^n\, \phi_{\ell,j}^\ast, \qquad \qquad \ell=2,\cdots,-2, \qquad  j=0,1,\cdots, N.
\end{split}
\right.
\ee
Here, $V_j=V(x_j)$,  $\delta_h^2$ is the second-order central finite difference operator and  $\sigma_\ell^n (\ell=2,\cdots,-2)$ are the projection constants chosen as   \eqref{constant_P_1}-\eqref{constant_main}. In addition, the homogeneous Dirichelet  boundary condition and initial data  are  discretized as  
\be
\phi_{\ell,0}^\ast=\phi_{\ell,N}^\ast=0,\qquad \phi_{\ell,j}^0=\phi_{\ell}(x_j, 0),
\qquad \ell=2,\cdots,-2,		\quad j=0,1,\cdots,N.
\ee


\section{Numerical results}
In this section, we first study how to choose proper initial data for computing numerically the ground states
of spin-2 BECs, then apply the numerical method to compute the ground states
under different interaction parameters $\beta_1$ and $\beta_2$ as well as 
the magnetization $M$ in one and two dimensions. Uniqueness and non-uniqueness of  the ground state 
are tested and discussed based on our extensive numerical results. 
In our numerical computations,  the ground state $\Phi_g:=\lim_{n\to\infty}\Phi^n$ 
is reached numerically when $\frac{\|\Phi^{n+1}-\Phi^n\|_{\infty}}{\textcolor{black}{\Delta t}} \leq \varepsilon :=10^{-7}$.

In practice, unless stated, we fix $\beta_0=100,$  ${\color{black}\Delta t}=0.005$ and  
$\mathcal{D}=[-10,10]^d$ for $d=1,2$. The mesh size is taken  as 
$h_x=1/64$ when $d=1$, and respectively, $h_x=h_y=1/16$ when $d=2$.
Moreover, $V(\bx)$ is chosen either as  the
harmonic plus optical lattice potential 
\be
\label{num_choice}
V(\bx)=\sum_{j=1}^d\left[\fl{1}{2}\nu^2_j+\eta\,(d-1)\sin^2\left(q_j\nu_j\right)\right], 
\quad  \bx\in\mathcal{D}, \quad d=1,2,
\ee
 with $\nu_1=x$, $\nu_2=y$, $\eta$ and $q_j\; (j=1,2)$ given constants, or the  box potential 
\begin{equation}
\label{box_potential}
V_{\rm box}(\bx)=\left\{\begin{array}{ll} 
0, &  \bx\in\mathcal{D},\\[0.25em]
+\infty, &  \hbox{\rm otherwise}.
\end{array}\right.
\end{equation}

\subsection{Choice of initial data and uniqueness of the ground state}
\label{sec:inidata_choice}
A  proper choice of   initial data $\Phi_0(\bx)$  usually improves significantly 
the efficiency and accuracy of the GFDN \eqref{eq:GF}-\eqref{proj}. For cases where SMA
is valid, e.g. the nematic  phase with $M=0$  and the ferromagnetic phase \cite{BCY},
see also Fig.~\ref{fig_sma}), one can either simply construct  ground state via \eqref{phig1c} 
 by solving the  ground state of the  single component BEC \eqref{phig1c},
or directly solve the ground state via the GFDN 
\eqref{eq:scheme-GF}-\eqref{proj}  with  initial data (it is a reasonable
and natural choice by noticing  section \ref{SMA}) taken as
\be 
\label{ini_choice}
\bm{\Phi}_0(\bx)=\bm{\xi}_g\,\phi(\bx),
\ee
where  $\bm{\xi}_g$  is given in \textcolor{black}{lemmas \ref{lem:gs_ferr}-\ref{lem:gs_cyc}}  and 
$\phi(\bx)$ is an approximation of the ground state of 
the single component BEC \eqref{phig1c}, e.g., the harmonic oscillator approximation $\phi_g^{\rm hos}(\bx)$ for 
small $\beta(M)$ and/or the  Thomas-Fermi approximation $\phi_g^{\rm TF}(\bx)$ for large $\beta(M)$ \cite{BD, BL}.  
For other cases where the SMA is invalid, the initial data $\Phi_0$ can be chosen either as 
\eqref{ini_choice} or as a more general initial set-up 
\be
\label{xi_choice}
\bm{\Phi}^0(\bx)= \phi(\bx)\bm{\xi}=:\fl{\phi(\bx)}{2}\Big( \sqrt{2+M-2\sigma},\,\sqrt{\sigma},\, \sqrt{2\sigma},\,
 \sqrt{\sigma},\, \sqrt{2-M-2\sigma} \Big)^T,
\ee
with $\sigma\in[0, 1-M/2]$. 
Extensive numerical comparison (not shown here for brevity) for different  initial data
show that the  GFDN would usually converge faster with initial data \eqref{ini_choice} than with
other types of initial data. Based on those comparison, we would conclude and suggest 
the choice of initial data as follows:

a). For ferromagnetic phase,  $\bm{\xi}_g$ is suggested to be chosen as \eqref{GSCase2Type1}  for  
$\forall M\in[0, 2)$.  Meanwhile, the ground state is  found to be  unique. 

b). For nematic phase, if $M\in(0, 2)$, $\bm{\xi}_g$ is suggested to be chosen as \eqref{GSCase1Type2},
 the ground state is  found to be unique. 
  However, if $M=0$,  the ground state is not unique. Hence, 
  any initial data chosen as \eqref{GSCase1Type2} works and
 probably converge to different ground states.

c). For cyclic phase, if $M\in(0, 2)$, then  $\bm{\xi}_g$ is suggested to be chosen as  
\eqref{GSCase3Type3} with 
$\theta=\arctan\sqrt{(2-M)/(1+M)}$, i.e.,
\begin{equation}
\label{h414}
\bm{\xi}_g=\left(\sqrt{(M+1)/3}, \ 0,\ 0,\ \sqrt{(2-M)/3}, \ 0 \right)^T.
\end{equation}
The ground state is  found to be  unique, which is essentially  different from the 
spatially uniform system where the ground state is not unique. While if $M=0$, similar as the nematic phase,  
the ground state is not unique, thus any initial data works and probably converge to different ground states.


\begin{figure}[htbp!]
\centerline{
\psfig{figure=./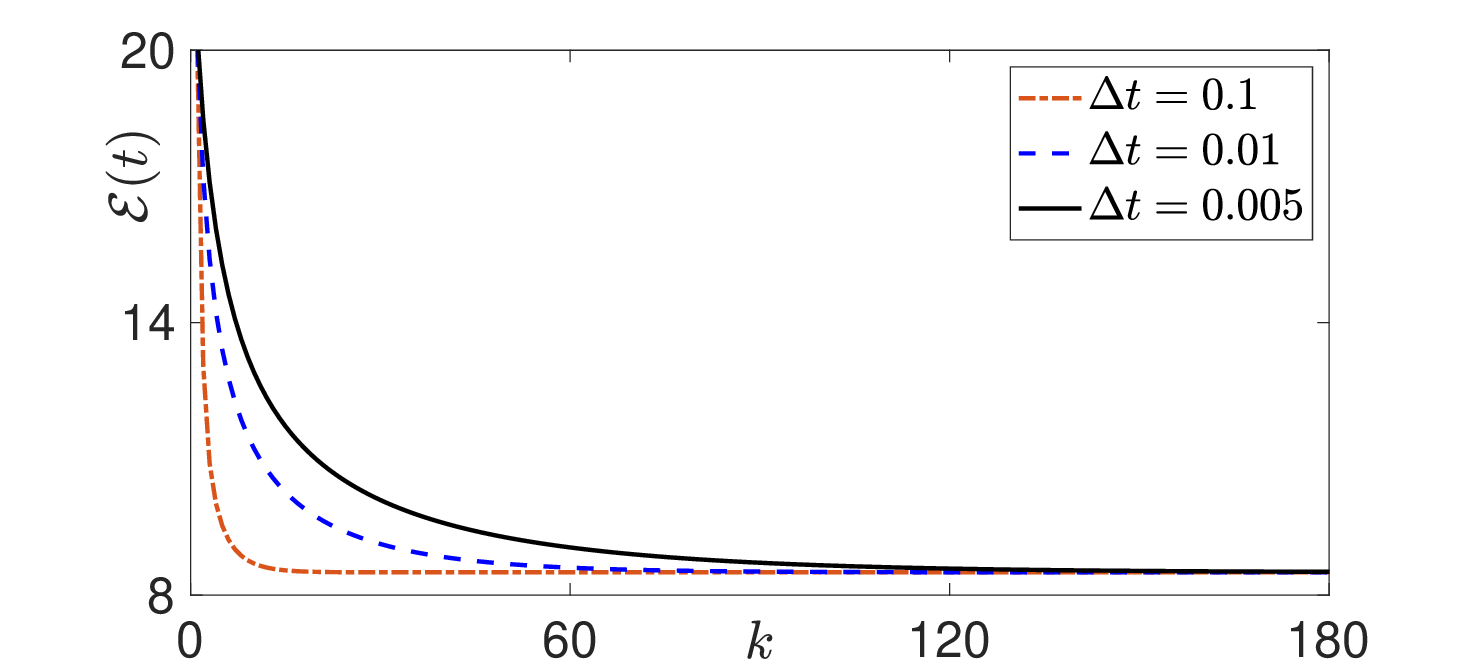,width=11.5cm,height=4cm}
}
{
\caption{Time evolution of the total energy $\mathcal{E}(t)$  in 
Example \ref{eg:choice_ini_data} with different time step size ${\color{black}\Delta t}$.}
}
\label{fig1} 
\end{figure}

 \medskip
 
 \begin{exmp}
 \label{eg:choice_ini_data}
 Here we show the energy-diminishing property of our numerical method.
 To this end, we let $M=0.5$,  $\beta_1=1$ and $\beta_2=-2$ (i.e. nematic phase).
 The ground state of a spin-2 BEC  is computed  by the BEFD
 \eqref{eq:scheme-GF}  with time step ${\color{black}\Delta t}=0.1/0.01/0.005$ and the initial data 
 \eqref{xi_choice} with $\sigma=0$, i.e., $\bm{\xi}$ is chosen as the ground state
  in a spatially uniform system \eqref{GSCase1Type2}.
  Fig. \ref{fig1} shows the evolution of the energy $\mathcal{E}(t):=\mathcal{E}(\Phi(\cdot,t))$ 
  with different time step ${\color{black}\Delta t}$.
 \end{exmp}
 
 From Fig.  \ref{fig1}  and additional  experiments not shown here for brevity, we can see that: 
(i). the energy is diminishing for different time step size ${\color{black}\Delta t}$, 
even for the relatively large step size ${\color{black}\Delta t}=0.1$ (cf. Fig. \ref{fig1}), and  (ii). the GFDN with different 
initial data converge to the same ground state. In addition, when $\sigma=0$, i.e.,  
$\bm{\xi}$ is chosen as the ground state in a spatially uniform system \eqref{GSCase1Type2}, the GFDN
usually converges in the fastest way.

\begin{figure}[htbp!]
\centerline{
\psfig{figure=./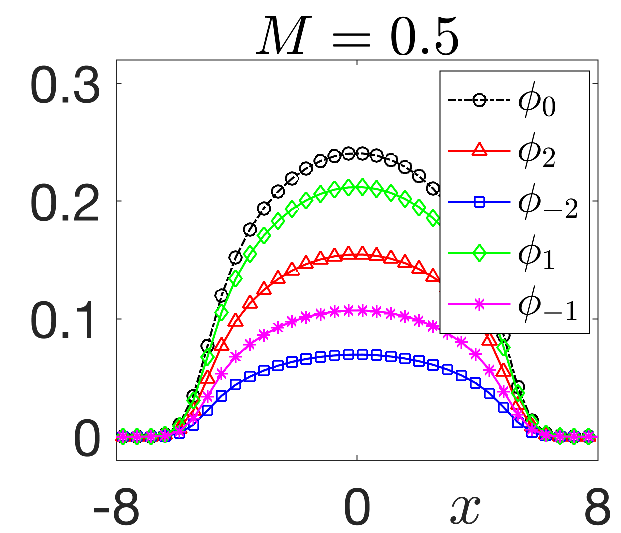,width=4.5cm,height=3.2cm}\qquad\qquad
\psfig{figure=./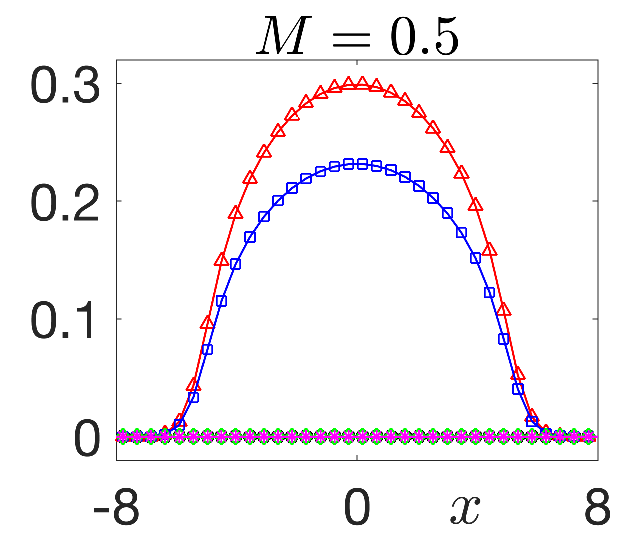,width=4.5cm,height=3.2cm}
}
\vspace{0.25cm}
\centerline{
\psfig{figure=./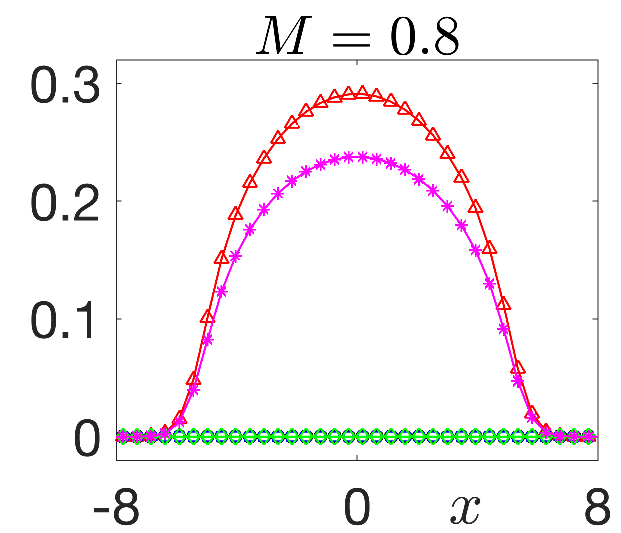,width=4.5cm,height=3.2cm}\qquad\qquad
\psfig{figure=./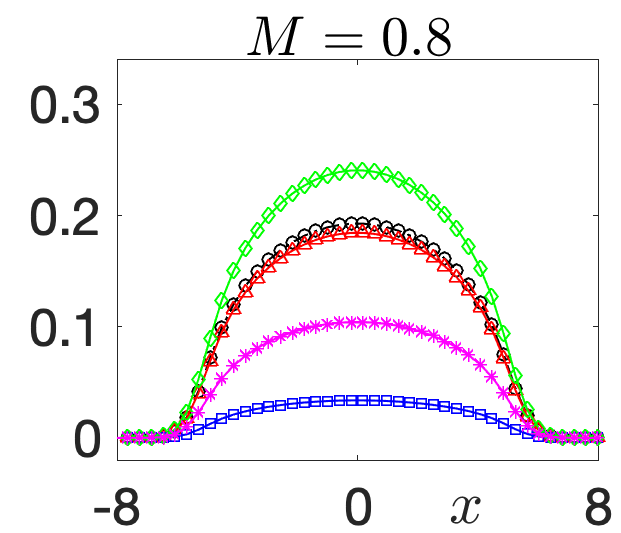,width=4.5cm,height=3.2cm}
}
\vspace{0.25cm}
\centerline{
\psfig{figure=./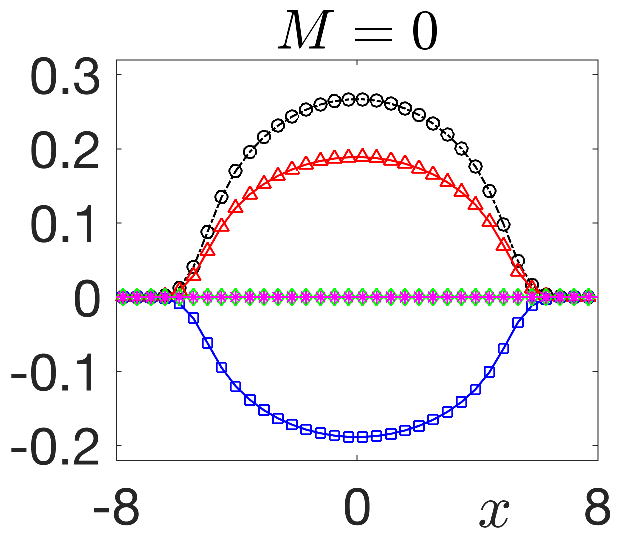,width=4.5cm,height=3.2cm}\qquad\qquad
\psfig{figure=./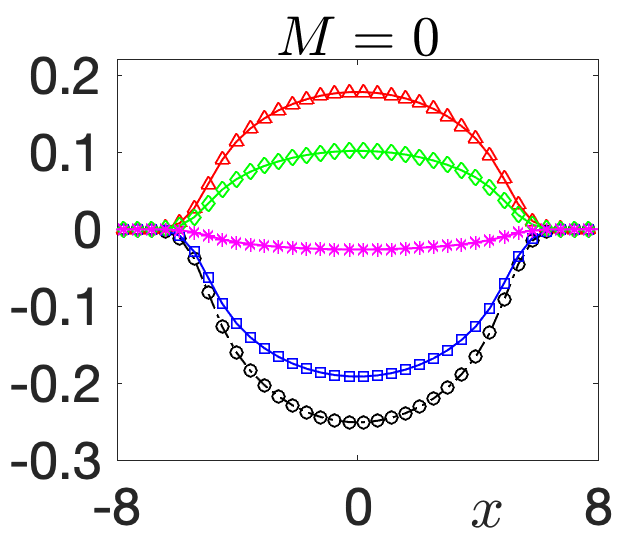,width=4.5cm,height=3.2cm}
}
\vspace{0.25cm}
\centerline{
\psfig{figure=./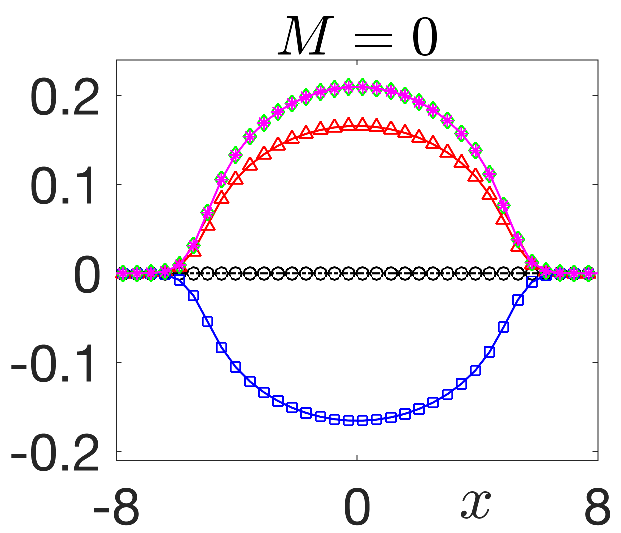,width=4.5cm,height=3.2cm}\qquad\qquad
\psfig{figure=./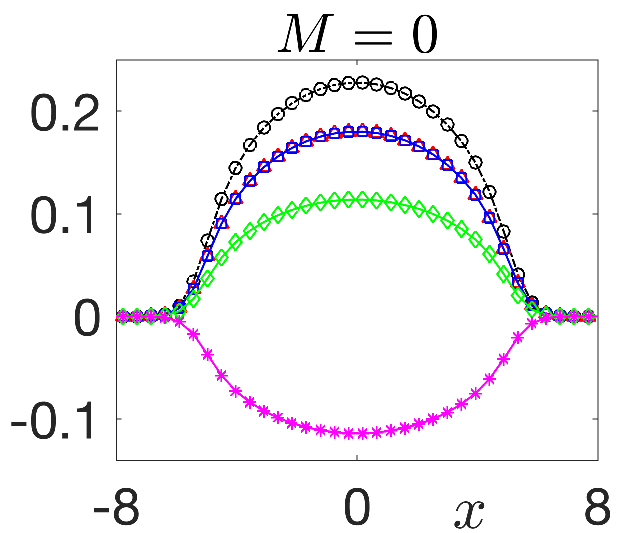,width=4.5cm,height=3.2cm}
}
 \caption{Plots of the wave function of the ground states
 $\phi_\ell$ ($\ell=2, 1, 0, -1, -2$) in  Cases 1-4  (from top to bottom)  that computed by different 
 initial data in  Example \ref{eg:num_1D_nonuniqueness}.}
 \label{fig: diff_GS}
\end{figure}

\subsection{Applications}
In this subsection, we apply our numerical method to compute the ground state of spin-2 BEC
with different parameter regimes.  

\medskip

 \begin{exmp}
 \label{eg:num_1D_nonuniqueness}
 Here we further study the non-uniqueness of the ground state under some parameter regimes. We take $d=1$, and carry out the following cases:
\begin{itemize}
 \item[(i)] Case 1.  $\beta_1=-1$, $\beta_2=-20$, $M=0.5$,  $\bm{\xi}_g$  is taken as  \eqref{GSCase2Type1}  and \eqref{GSCase1Type2}, respectively; 

 \item[(ii)] Case 2.  $\beta_1=0$, $\beta_2=1$, $M=0.8$, $\bm{\xi}_g$ is taken as  \eqref{GSCase2Type1} and \eqref{h414}, respectively;
 
 \item[(iii)] Case 3.  $\beta_1=10$, $\beta_2=2$, $M=0$,  $\bm{\xi}_g$  is taken as  \eqref{GSCase3Type311}  and  \eqref{GSCase3Type3}    with  $\theta=\arcsin(1/5)$, respectively;
 
 \item[(iv)] Case 4.  $\beta_1=1$, $\beta_2=-2$, $M=0$,  $\bm{\xi}_g$ is taken as 
  \eqref{GSCase1Type11} with $(\gamma_1=0.4,       \theta=\arcsin\sqrt{2/15})$  and  
       $\theta=\arcsin(2\sqrt{2}/5)$, respectively.         
\end{itemize}
 \end{exmp}
 
Fig. \ref{fig: diff_GS} depicts the wave functions of different ground states computed by different initial data in Cases 1-4. For each case, different initial data converges to different ground states with the same energy, {\color{black} which are $\mathcal{E}(\Phi^g)=8.28198899$, $8.50852656$, $8.50852656$ and $8.48600868$ for Cases 1-4, respectively}.  From Fig. \ref{fig: diff_GS} and additional results not shown here for brevity, we can see that the ground states  are not unique for the following four cases (cf. Fig. \ref{fig: diff_GS}): 
(a).  $\forall\, M\in[0, 2)$ and $\beta_1<0$, $\beta_2=20\beta_1$.
(b).  $\forall\, M\in[0, 2)$ and $\beta_2>0$, $\beta_1=0$.
(c).  $M=0$ and  $\beta_1>0$, $\beta_2>0$. 
(d).  $M=0$  and $\beta_2<0$, $\beta_2<20\beta_1$.

\medskip

 \begin{exmp}
 \label{eg:num_1D_gs_etc}
 In order to study the wave functions and SMA property of the ground states
 in different parameter regimes. We take $d=1$, initial data as \eqref{ini_choice} 
with $\bm{\xi}_g$ reading as \eqref{GSCase2Type1}, \eqref{GSCase1Type2} 
and \eqref{h414}, and consider following three cases:
\begin{itemize}
 \item[(i)]  Case 5. ferromagnetic phase,  we take $\beta_1=-1$ and $\beta_2=2$;

 \item[(ii)] Case 6. nematic phase, we choose $\beta_1=1$ and $\beta_2=-2$; and

 \item[(iii)] Case 7. cyclic phase, we let  $\beta_1=10$ and $\beta_2=2$.
\end{itemize}

%

 \end{exmp}

 \begin{figure}[htbp!]
\centerline{
\psfig{figure=./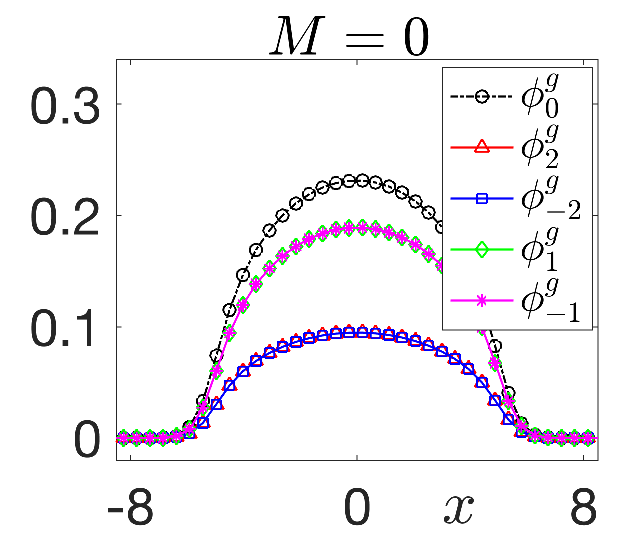,width=3.8cm,height=3.cm}
\hspace{0.3cm}
\psfig{figure=./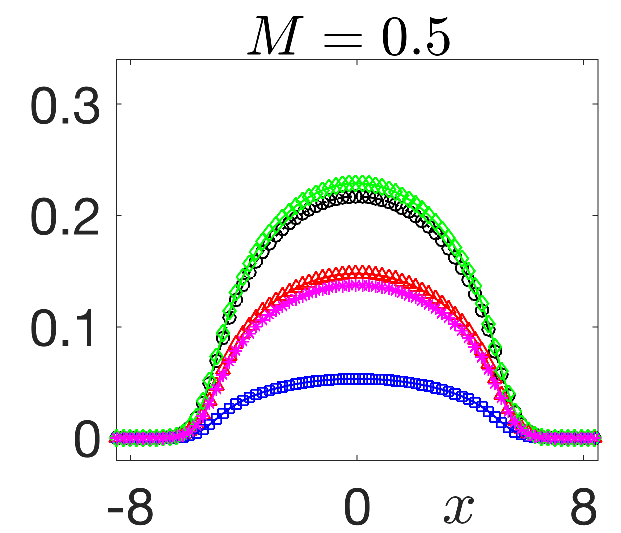,width=3.8cm,height=3.cm}
\hspace{0.3cm}
\psfig{figure=./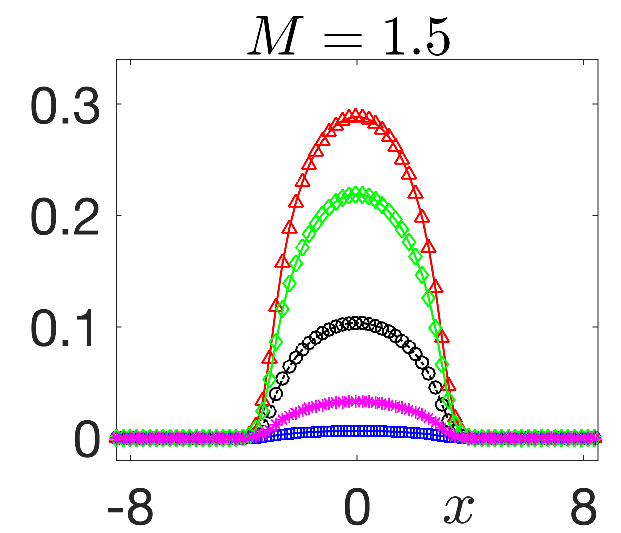,width=3.8cm,height=3.cm}
}
\vspace{0.3cm}
\centerline{
\psfig{figure=./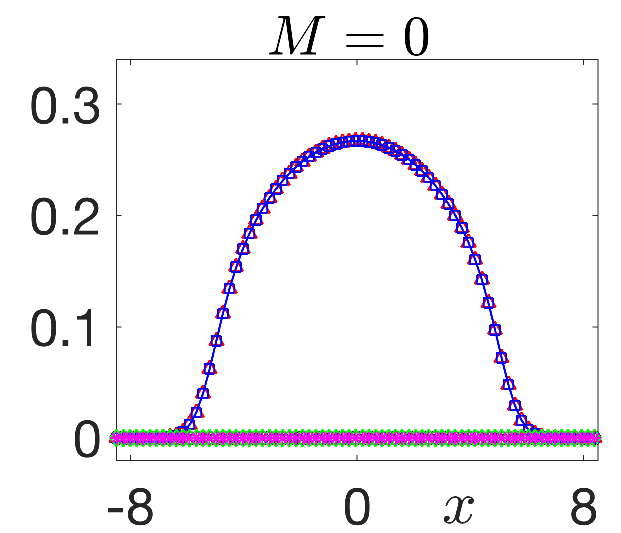,width=3.8cm,height=3.cm}
\hspace{0.3cm}
\psfig{figure=./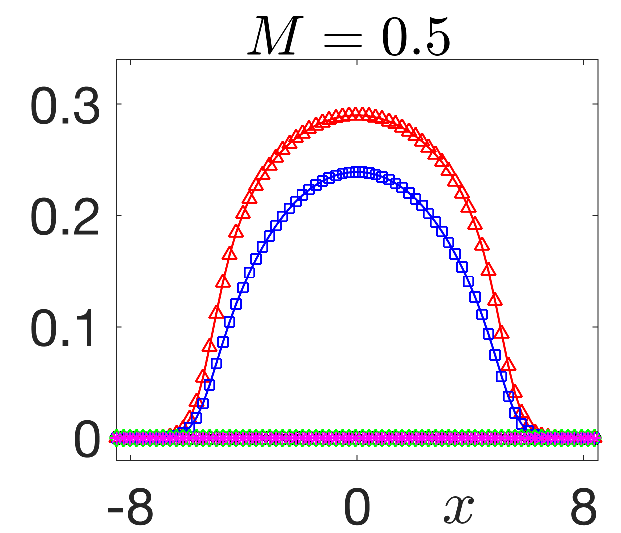,width=3.8cm,height=3.cm}
\hspace{0.3cm}
\psfig{figure=./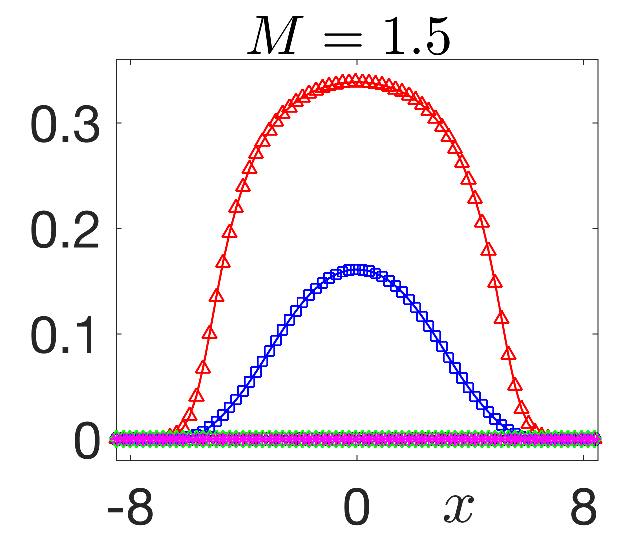,width=3.8cm,height=3.cm}
}
\vspace{0.3cm}
\centerline{
\psfig{figure=./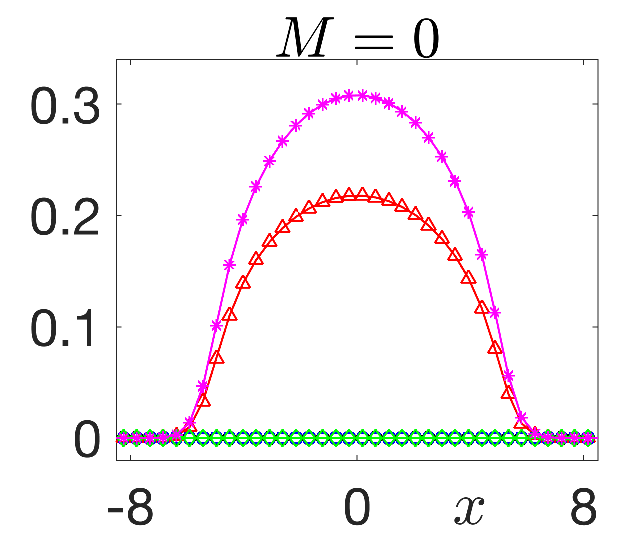,width=3.8cm,height=3.cm}
\hspace{0.3cm}
\psfig{figure=./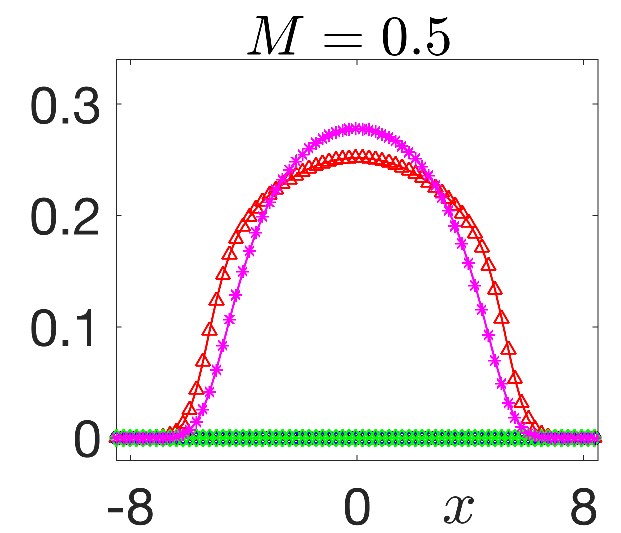,width=3.8cm,height=3.cm}
\hspace{0.3cm}
\psfig{figure=./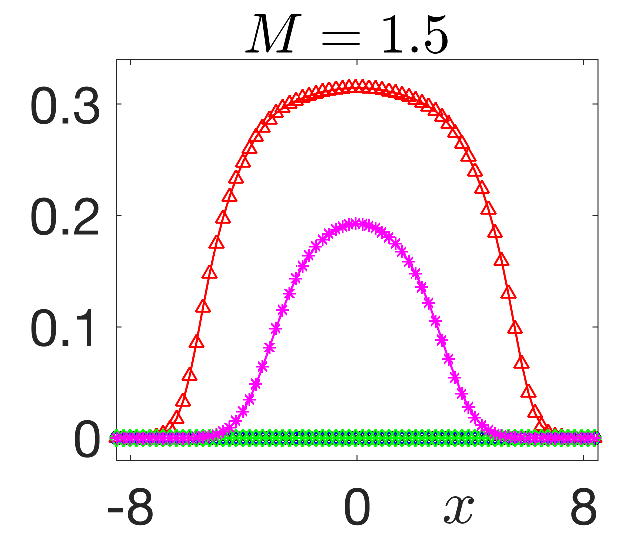,width=3.8cm,height=3.cm}
}
\caption{Wave functions of ground states, i.e., $\phi_\ell^g$ ($\ell=2,1,0,-1,-2$)  with 
different magnetizations $M=0,\; 0.5,\; 1.5$ (left to right)  for Cases 5-7 
(top to bottom)  in  Example \ref{eg:num_1D_gs_etc}.}
\label{fig: gs_1d_case5to7}
\end{figure}

 \begin{table}[!htbp]
\centering\small
{\color{black}
\caption{The component masses $\mathcal{N}_\ell$ ($\ell=2,1,0,-1,-2$), total masses $\mathcal{N}(\Phi^g)$ and total energies $\mathcal{E}(\Phi^g)$ of the ground states $\Phi^g$ for Cases 5-7 (top to bottom) in Example \ref{eg:num_1D_gs_etc}.}
\label{tab:mass_eng}
\begin{tabular}{ccccccccc}
\hline
$(\beta_1,\beta_2)$ & $M$ & $\mathcal{N}_2$ & $\mathcal{N}_1$ & $\mathcal{N}_0$ & $\mathcal{N}_{-1}$ & $\mathcal{N}_{-2}$ & $\mathcal{N}(\Phi^g)$ & $\mathcal{E}(\Phi^g)$\\
\hline
\multirow{3}{*}{$(-1,2)$}
& 0 & 0.0627 & 0.2500 &0.3744 & 0.2500& 0.0627 &1.0000 & 8.2820 \\
& 0.5 &0.1530 &0.3659 & 0.3290 & 0.1321&0.0199 &1.0000 & 8.2820 \\
& 1.5 & 0.5865 & 0.3343 & 0.0720& 0.0069 &0.0002 &1.0000 & 8.2820\\
\hline
\multirow{3}{*}{$(1,-2)$}
& 0 & 0.5000& 0 & 0 & 0& 0.5000 &1.0000 & 8.4860\\
& 0.5 & 0.6250 & 0 & 0 &0&  0.3750 &1.0000  & 8.5003\\
& 1.5 & 0.8750 & 0 &0& 0&0.1250&1.0000  &8.6187\\
\hline
\multirow{3}{*}{$(10,2)$}
& 0 & 0.3333 & 0 & 0 & 0.6667 & 0 &1.0000 &8.5085 \\
& 0.5 &0.5000 & 0& 0 & 0.5000 & 0 &1.0000 & 8.6309 \\
& 1.5 &0.8333 & 0 &0  & 0.1667 &0 &1.0000  & 9.6496 \\
\hline
\end{tabular}
}
\end{table}

\begin{figure}[htbp!]
\centerline{
\psfig{figure=./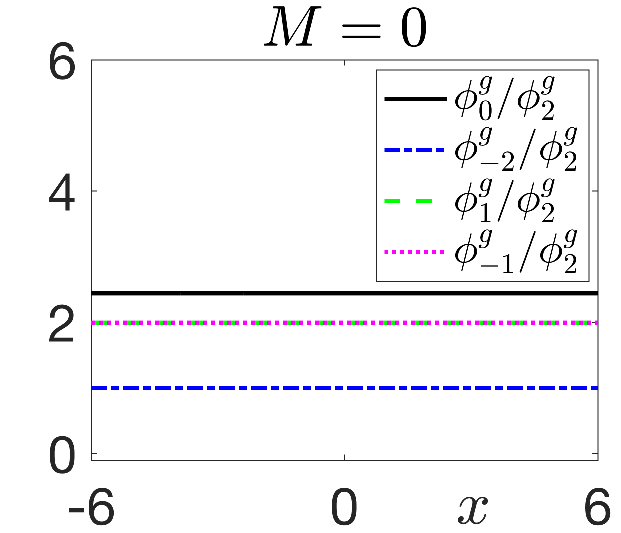,width=3.8cm,height=3.cm}
\hspace{0.3cm}
\psfig{figure=./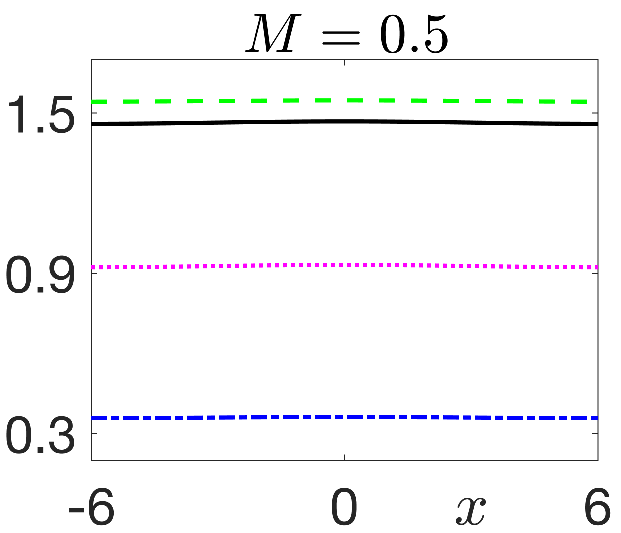,width=3.8cm,height=3.cm}
\hspace{0.3cm}
\psfig{figure=./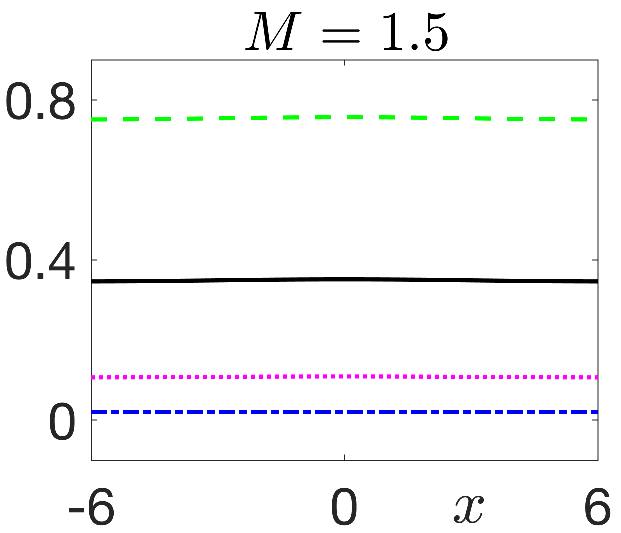,width=3.8cm,height=3.cm}
}
\vspace{0.3cm}
\centerline{
\psfig{figure=./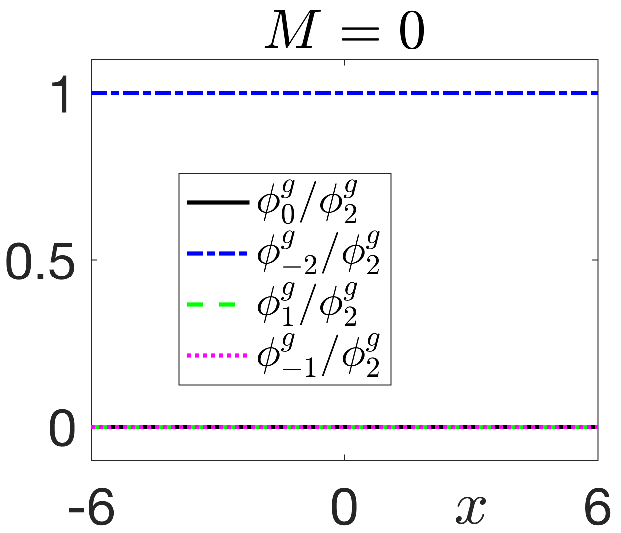,width=3.8cm,height=3.cm}
\hspace{0.3cm}
\psfig{figure=./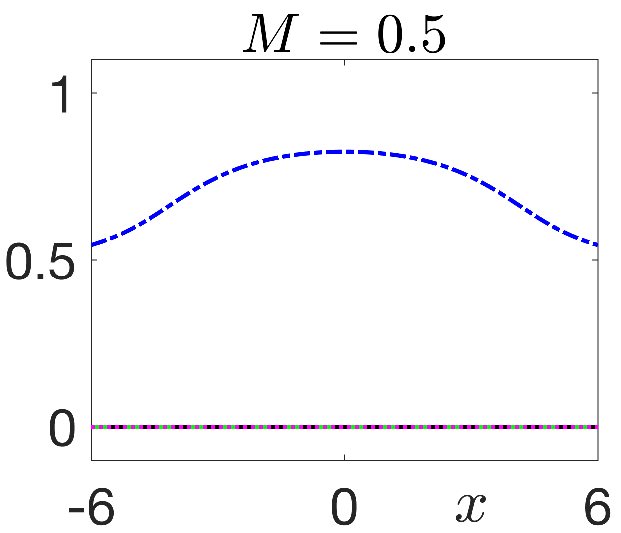,width=3.8cm,height=3.cm}
\hspace{0.3cm}
\psfig{figure=./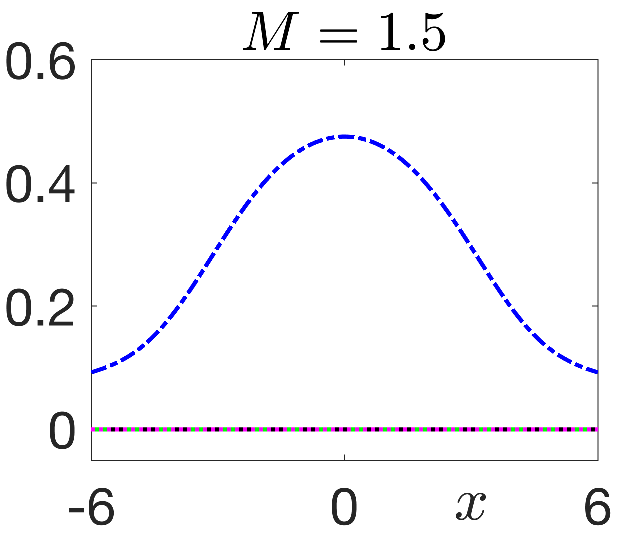,width=3.8cm,height=3.cm}
}
\vspace{0.3cm}
\centerline{
\psfig{figure=./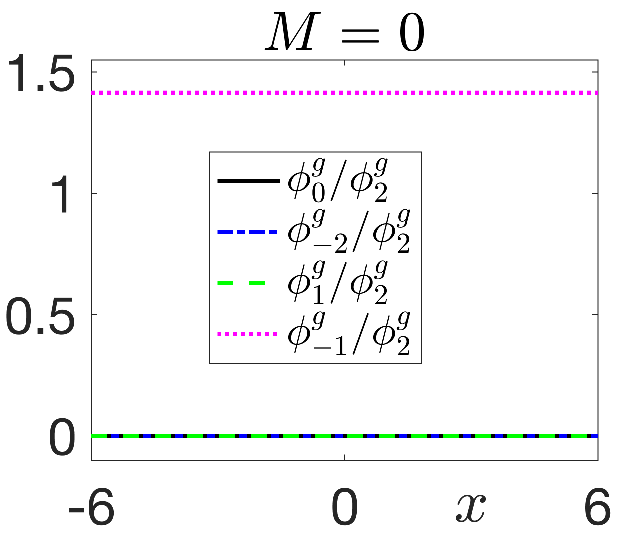,width=3.8cm,height=3.cm}
\hspace{0.3cm}
\psfig{figure=./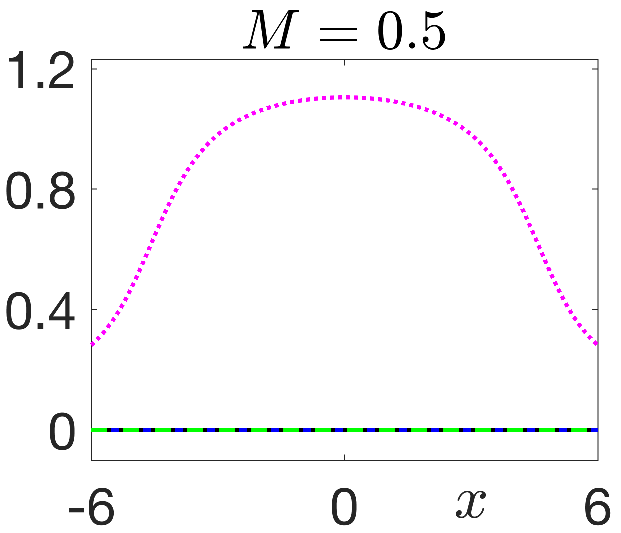,width=3.8cm,height=3.cm}
\hspace{0.3cm}
\psfig{figure=./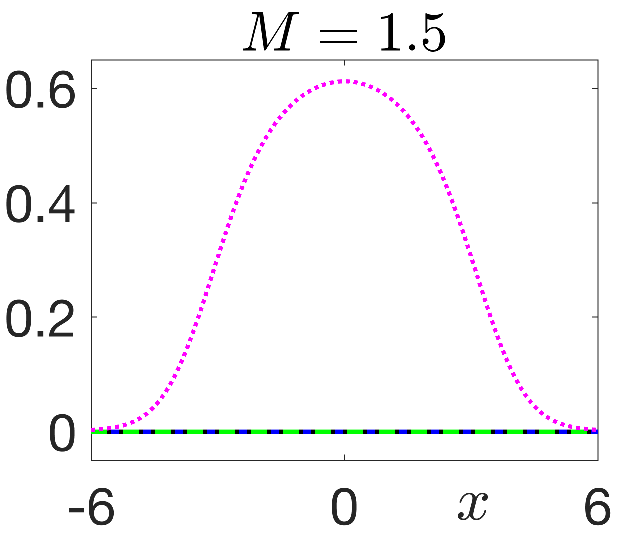,width=3.8cm,height=3.cm}
}
\caption{Plots of $\phi^g_1/\phi^g_2$ (dashed line), $\phi^g_0/\phi^g_2$  (solid line),  $\phi^g_{-1}/\phi^g_2$  (dotted line), and $\phi^g_{-2}/\phi^g_2$  (dashed-dotted line) for Cases 5-7 (top to bottom) in  Example \ref{eg:num_1D_gs_etc}, respectively, to analyze the SMA property for different parameters $(\beta_1,\beta_2,M)$. }
\label{fig_sma} 
\end{figure}

\smallskip
 
Fig. \ref{fig: gs_1d_case5to7} depicts the wave functions of the ground states in Cases 5-7 for magnetizations $M=0$, $0.5$ and $1.5$, respectively, {\color{black}while Table \ref{tab:mass_eng} shows the component masses $\mathcal{N}_\ell$ ($\ell=2,1,0,-1,-2$), total masses $\mathcal{N}(\Phi^g)$ and total energies $\mathcal{E}(\Phi^g)$ of the corresponding ground states.} Fig. \ref{fig_sma} shows the SMA property for different  $(\beta_1,\beta_2,M)$.\par


From Figs. \ref{fig: gs_1d_case5to7}-\ref{fig_sma}, Table \ref{tab:mass_eng} and extensive numerical experiments not shown here for brevity, we observe that: (i). When $M\in(0, 2)$, for ferromagnetic phase, $\phi_\ell^g>0$ for all $\ell=-2,-1,0,1,2$ (cf. Fig. \ref{fig: gs_1d_case5to7} (top row)). For nematic phase, $\phi_2^g>0$ \& $\phi_{-2}^g>0$, while $\phi_1^g=\phi_{-1}^g=\phi_0^g\equiv0$ (cf. Fig. \ref{fig: gs_1d_case5to7} (middle row)). For  cyclic phase, $\phi_2^g>0$ \& $\phi_{-1}^g>0$,  while $\phi_1^g=\phi_{-2}^g=\phi_0^g\equiv0$  (cf. Fig. \ref{fig: gs_1d_case5to7}( bottom row)). {\color{black}(ii). The component masses $\mathcal{N}_\ell$ ($\ell=2,1,0,-1,-2$) of the ground state of non-uniform spin-2 BEC system are the same as those of the corresponding spatial uniform system. Meanwhile, the total energy of the ferromagnetic ground states are independent of the magnetization $M$, whereas the total energy of the nematic or cyclic ground states are increased with the magnetization $M$ (cf. Table \ref{tab:mass_eng}).} (iii). The SMA is valid for the ground states with  ferromagnetic phase and those with the nematic or cyclic phases as well as zero magnetization, however, it is invalid for the other cases (cf. Fig.~\ref{fig_sma}). 

 \begin{exmp}
 \label{eg:num_2D}
 Here, we study the ground state of a two-dimensional
  spin-2 BEC  with harmonic/box/ optical lattice potentials. To this end,
 we take $d=2$, $q_1=q_2=\pi/2$, $\eta=0$/$\eta=10$ in \eqref{num_choice} for the harmonic/optical lattice potential, and choose a box potential $V_{box}(\bx)$ as in \eqref{box_potential}. We consider the following three cases: 
\begin{itemize}
 \item[(i)] Case 9. ferromagnetic phase,  let $\beta_1=-1$ {\rm \&} $\beta_2=-5$;
 
 \item[(ii)] Case 10.   nematic phase,  choose $\beta_1=-1$ {\rm \&} $\beta_2=-25$;
 
 \item[(iii)] Case 11.  cyclic phase,  take $\beta_1=10$ {\rm \&} $\beta_2=2$.
 \end{itemize}
 \end{exmp}

 \begin{figure}[htbp!]
\hspace{-1.5cm}
\psfig{figure=./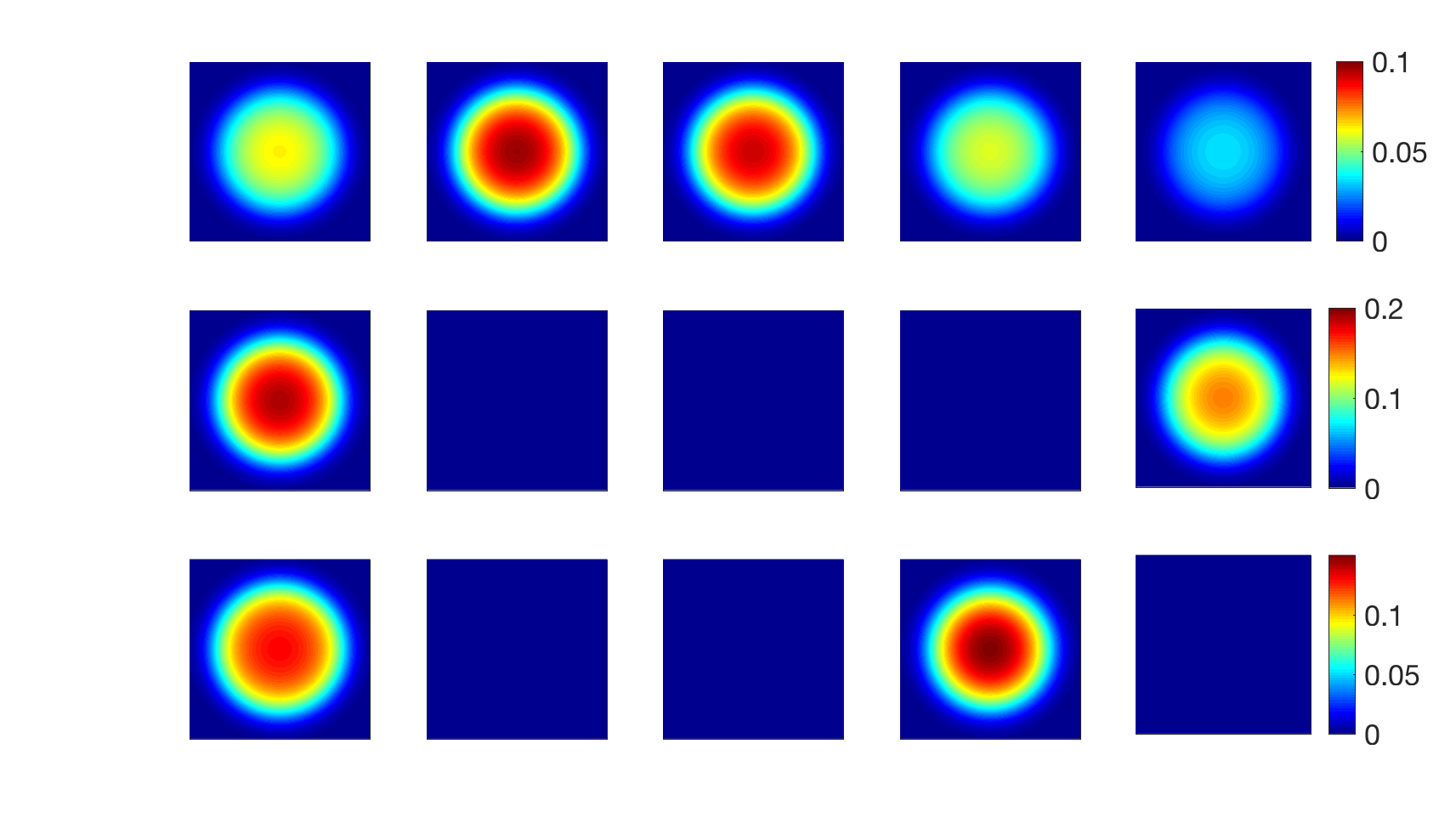,width=5.8in,height=3.0in}
\vspace{-1.2cm}
 \caption{$M=0.5$.  Contour plots of the components of the ground states  $\phi_\ell^g$ 
 (from left to right, $\ell=2,1,0,-1,-2$)  in  Cases 9-11 (top to bottom) with the harmonic potential in  Example \ref{eg:num_2D}.
 }
\label{fig16}
\end{figure}

\begin{figure}[htbp!]
\hspace{-1.5cm}
\psfig{figure=./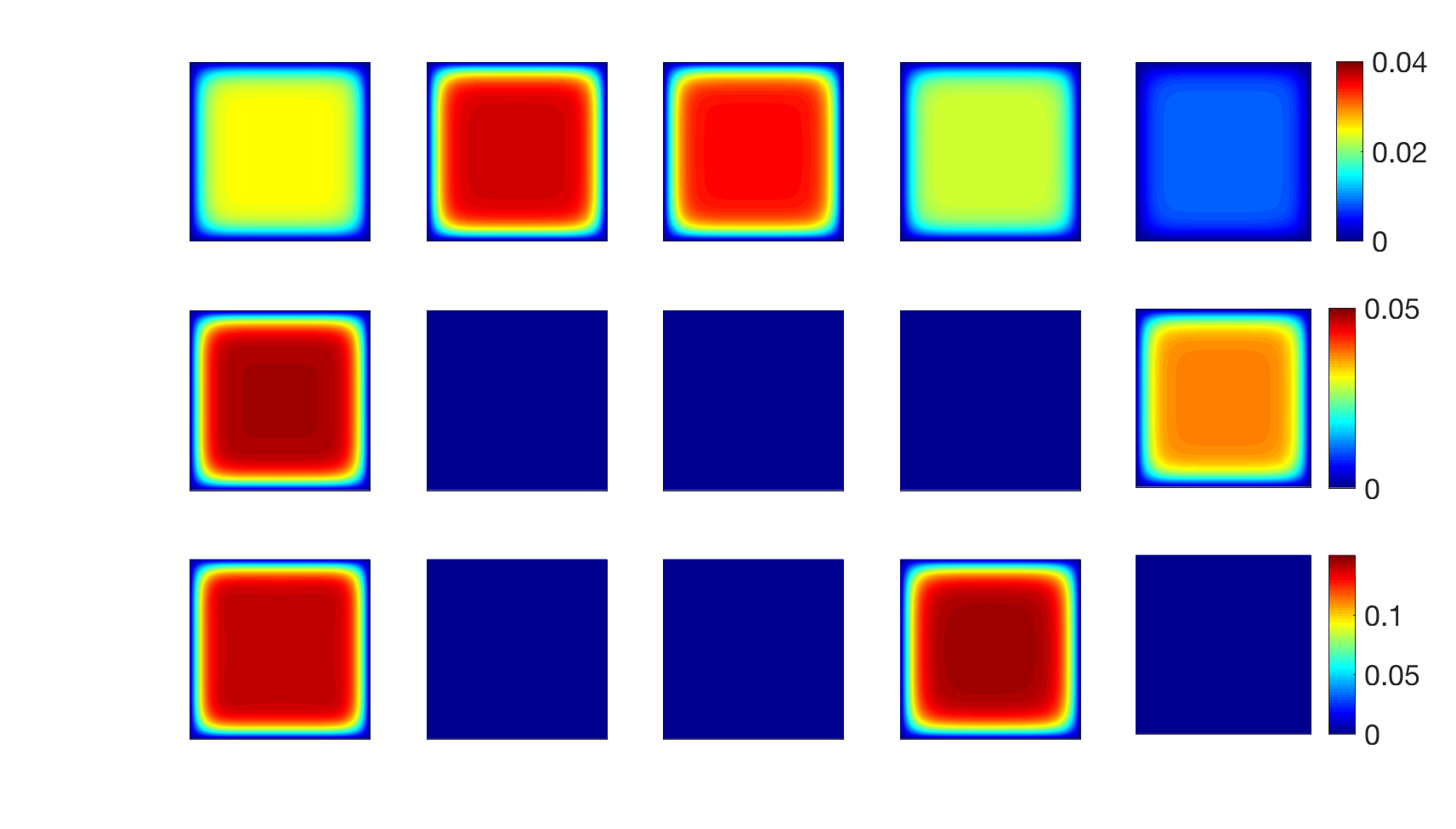,width=5.8in,height=3.2in}
\vspace{-1.2cm}
 \caption{$M=0.5$.  Contour plots of the components of the ground states  $\phi_\ell^g$ 
 (from left to right, $\ell=2,1,0,-1,-2$)  in Cases 9-11 (top to bottom) with the box potential in  Example \ref{eg:num_2D}.
 }
\label{fig17}
\end{figure}

\begin{figure}[htbp!]
\hspace{-1.5cm}
\psfig{figure=./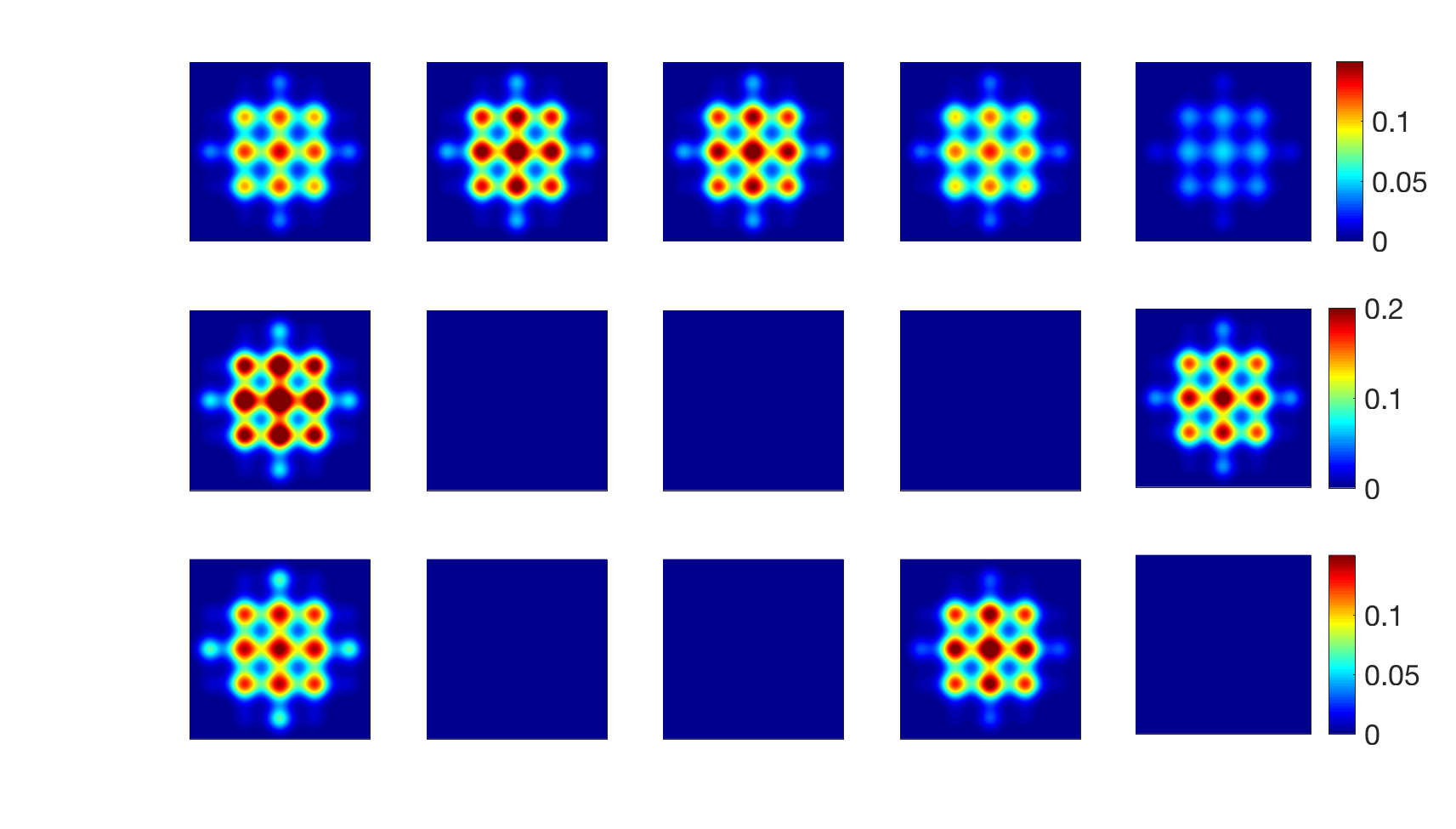,width=5.8in,height=3.2in}
\vspace{-1.2cm}
 \caption{$M=0.5$.  Contour plots of the components of the ground states  $\phi_\ell^g$ 
 (from left to right, $\ell=2,1,0,-1,-2$)  in  Cases 9-11  (top to bottom) with the optical lattice potential in  Example \ref{eg:num_2D}.
 }
\label{fig18}
\end{figure}

 \begin{table}[!htbp]
\centering\small
{\color{black}
\caption{The component masses $\mathcal{N}_\ell$ ($\ell=2,1,0,-1,-2$) and total energies $\mathcal{E}(\Phi^g)$ of the ground states $\Phi^g$ for Cases 9-11 (top to bottom) in Example \ref{eg:num_2D}.}
\label{tab:mass_eng2D}
\begin{tabular}{cccccccc}
\hline
$V(x,y)$ &$(\beta_1,\beta_2)$ & $\mathcal{N}_2$ & $\mathcal{N}_1$ & $\mathcal{N}_0$ & $\mathcal{N}_{-1}$ & $\mathcal{N}_{-2}$  & $\mathcal{E}(\Phi^g)$\\
\hline
\multirow{3}{*}{$\fl{1}{2}\sum\limits_{j=1}^2\nu^2_j$}
& $(-1,-5)$& 0.1526 & 0.3662 &0.3296 & 0.1318& 0.0198  & 3.8727 \\
& $(-1,-25)$ &0.6250 &0 & 0 & 0&0.3750  & 3.8553 \\
& $(10,2)$ & 0.5000 & 0 & 0& 0.5000 &0  & 3.9848\\
\hline
\multirow{3}{*}{$\fl{\sum\limits_{j=1}^2\big[\nu^2_j+20\sin^2\big(\frac{\pi\nu_j}{2}\big)\big]}{2}$}
& $(-1,-5)$ & 0.1526 & 0.3662 &0.3296 & 0.1318& 0.0198  &  11.7247\\
& $(-1,-25)$ & 0.6250 & 0 & 0 &0&  0.3750   & 11.7012\\
& $(10,2)$ & 0.5000 & 0 &0& 0.5000&0  &11.8746\\
\hline
\multirow{3}{*}{$V_{\rm box}(x,y)$}
& $(-1,-5)$ & 0.1526 & 0.3662 &0.3296 & 0.1318& 0.0198  & 0.2024 \\
& $(-1,-25)$ & 0.6250 & 0 & 0 &0&  0.3750   & 0.2009 \\
& $(10,2)$ & 0.5000 & 0 &0& 0.5000&0  & 0.2128 \\
\hline
\end{tabular}
}
\end{table}

Figs.  \ref{fig16}-\ref{fig18} show the plots of the wave functions of the ground states 
in Cases 9-11 with the harmonic, box and optical lattice potential, respectively, {\color{black}and Table \ref{tab:mass_eng2D} presents the component masses $\mathcal{N}_\ell$ ($\ell=2,1,0,-1,-2$) and total energies $\mathcal{E}(\Phi^g)$ of the corresponding ground states.} From these results and additional numerical experiments not shown here for brevity, one finds that our method can be applied to compute the ground state of spin-2 BEC with general potentials. {\color{black} The component masses $\mathcal{N}_\ell$ are independent of the types of potentials, but the energies of the ground states are changed with different types of potentials (cf. Table \ref{tab:mass_eng2D})}. Additionally,  similar as the 1-d case,  the uniqueness,  validity of SMA and phenomena of vanishing-component of the ground state can also be concluded in the two-dimensional case.

\section{Conclusion}
We proposed an efficient and accurate normalized gradient flow method for computing 
the ground states of spin-2 BEC by introducing three additional projection constraints, in addition to 
the conservation of the total mass and magnetization.  A backward-forward finite difference method
was applied to fully discretize the gradient flow with discrete normalization. 
Moreover,  the ground states in  spatially uniform system, i.e $V(\bx)=0$, were solved analytically, which 
give important hints  to understand the properties of  ground states in spatially  non-uniform systems
as well as to the choice of initial data for numerical calculations.
The numerical method was then applied to study  the ground states of 
spin-2 BECs with {\color{black}  ferromagnetic, nematic and cyclic 
phases under harmonic, box and optical lattice  potentials in  both one- and two-dimension.} 
Various numerical experiments  were carried out, which suggest some    
interesting properties about the ground states. For example,  
the parameter regimes for the uniqueness,  validity of SMA and phenomena of vanishing-component were  numerically partially found. Rigorous mathematical justifications for these observations are on-going.


\end{document}